\newtheorem{thm}{Theorem}
\newtheorem{lemma}[thm]{Lemma}
\newtheorem{cor}[thm]{Corollary}
\newtheorem{prop}[thm]{Proposition}
\theoremstyle{remark} 
\newtheorem{remark}[thm]{Remark}
\newtheorem{example}[]{Example}
\newcommand{\pd}{\partial}
\newcommand{\RR}{\mathbb{R}}
\newcommand{\RP}{\mathbb{R}\textrm{P}}
\newcommand{\EE}{\mathbb{E}}
\newcommand{\PP}{\mathbb{P}}
\newcommand{\B}{B_{x}}
\newcommand{\e}{\varepsilon}
\newcommand{\N}{\mathcal{N}}
\newcommand{\const}{c}
\newcommand{\Const}{C}
\title[random algebraic hypersurfaces]{On the number of connected components of random algebraic hypersurfaces}
\author{Yan V. Fyodorov}
\author{Antonio Lerario}\author{Erik Lundberg}
\begin{document}

\begin{abstract}
We study the expectation of the number of components $b_0(X)$ of a random algebraic hypersurface $X$
defined by the zero set in projective space $\RP^n$ of a random homogeneous polynomial $f$ of degree $d$.
Specifically, we consider \emph{invariant ensembles}, that is Gaussian ensembles of polynomials that are invariant under an orthogonal change of variables.

Fixing $n$, under some rescaling assumptions on the family of ensembles (as $d \rightarrow \infty$), 
we prove that $\EE b_0(X)$ has the same order of growth as $\left[ \EE b_0(X\cap \RP^1) \right]^{n}$.
This relates the average number of components of $X$ to the 
classical problem of M. Kac (1943) on the number of zeros of the random univariate polynomial $f|_{\RP^1}.$

The proof requires an upper bound for $\EE b_0(X)$, which we obtain by counting extrema
using Random Matrix Theory methods from \cite{Fyodorov}, 
and it also requires a lower bound, which we obtain by a modification of the barrier method from \cite{LerarioLundberg, NazarovSodin}.

We also provide  \emph{quantitative} upper bounds on implied constants; for the \emph{real Fubini-Study}  model these estimates reveal 
super-exponential decay (as $n \rightarrow \infty$) of the leading coefficient (in $d$) of $\EE b_0(X)$.
\end{abstract}

\maketitle


\section{Introduction}
Complex algebraic geometry is a subject of theorems that build crucially on generic statements,
whereas real algebraic geometry is by necessity a more algorithmic subject.
For instance, it is well known that a \emph{generic} complex plane algebraic curve of degree $d$ (even if the coefficients of the defining polynomial are real) 
is a surface of genus \begin{small}$g=\frac{(d-1)(d-2)}{2}$\end{small}. 
On the other hand, the real  part of this curve will consist of \emph{at most} $g+1$ components
homeomorphic to circles.
Moreover, all possibilities between \begin{small}$\frac{1+(-1)^{d+1}}{2}$\end{small} and $g+1$ can occur,
and even case-by-case analysis is difficult; in fact the number of possible arrangements increases super-exponentially when $d\to \infty$ \cite{KhOr} (the interested reader can see \cite{Wilson} for a classical exposition).

Yet, one still would like to have a broad picture in the real setting,
and a way to achieve that is to replace the word ``generic'' with ``typical''.
If we choose our curve \emph{randomly} then what outcome do we expect to see?
This question is part of a random approach 
(recently initiated by P. Sarnak \cite{Sarnak}) to \emph{Hilbert's Sixteenth Problem} (H16):
to investigate the ``number, shape, and position'' 
of the components of real algebraic hypersurfaces in projective space.


Returning to the basic question:
$$\emph{``How many components does a random hypersurface $X\subset \RP^n$ have on average?''}$$
we need to make the meaning of ``random hypersurface'' precise.

By a \emph{hypersurface} $X$ we will always mean the zero set in the projective space $\RP^n$ of a real homogeneous polynomial of degree $d$ in $n+1$ variables - we will denote the space of such polynomials by 
$$W_{n,d}=\{\textrm{real homogeneous polynomials of degree $d$ in $n+1$ variables}\}.$$

Still we need to specify a probability distribution on $W_{n,d}$, making precise the meaning of ``random'',
which strongly affects the outcome.
Consider for example a real curve whose defining polynomial is sampled uniformly from the 
unit sphere in the $L^2_{S^n}$-norm. 
The corresponding random curve is called 
\emph{real Fubini-Study} and it has order $d^n$ many components \cite{LerarioLundberg} on average. 
On the other hand if we sample the defining polynomial uniformly 
from the unit sphere in the $L^2_{S^{2n-1}}$-norm 
(after extending the polynomial to the complex domain), 
the corresponding random curve is called \emph{Kostlan}, 
and it has  only order $d^{n/2}$ many components \cite{GaWe3}.

One goal of the current paper is to unify and ``interpolate'' these two results by extending them
to a larger family of Gaussian ensembles.
We will consider Gaussian ensembles that are invariant under an orthogonal change of coordinates 
(so that there is no preferred point or direction in projective space).

Denoting by $\{Y_{\ell}^{j}\}_{j\in J_\ell}$ the standard basis of spherical harmonics of degree $\ell$ on $S^n$, 
a random invariant polynomial $f(x)$ of degree $d$ in $n+1$ variables can be constructed as a sum of weighted spherical harmonics with i.i.d. Gaussian coefficients $\xi_{\ell}^j$:
 
\begin{equation}\label{random}
 f(x)=\sum_{d-\ell\in 2\mathbb{N}}{p_d(\ell)}\sum_{j\in J_\ell}\xi_{\ell}^j \|x\|^{d-\ell}Y_{\ell}^j\left(\frac{x}{\|x\|}\right),\quad {p_d(\ell)}\geq0.
\end{equation}
The nonnegative weights $p_d(d), p_{d}(d-2), \ldots$ parameterize all invariant ensembles \cite{EdelmanKostlan95}. 

The question that we want to address is whether it is possible to determine, from the ${p_d(\ell)}$, 
the order of growth (as $d \rightarrow \infty$) of the number of connected components of the random hypersurface $\{f=0\}.$ 
Since we will be interested in the asymptotic for $n$ fixed and $d$ large, 
it is natural to ask that our ensembles of random polynomials, one for every $d$, are ``coherent'' 
one with respect to the other -- for example taking the Kostlan distribution for even degrees and the real Fubini-Study for odd degrees
leads to erratic asymptotic behavior for the number of connected components of $X$.

We are thus led to consider choices of the weights $p_d(d), p_{d}(d-2),\ldots$ satisfying the following two conditions: (normalization) $\sum_{d-\ell \in 2\mathbb{N}}p_{d}(\ell)=1$; (coherence) there exists $0<\lambda\leq 1$ and a \emph{nonzero} function $\psi$ such that:
\begin{equation*}
p_{d}(d^{\lambda} x)d^{\lambda} \rightarrow \psi(x) \quad \textrm{pointwise and dominated by a subgaussian function.}
\end{equation*}
The first condition (normalization) is simply obtained by dividing for each $d$ all the weights by their sum and does not change the asymptotic (it only rescales the random polynomial by a constant). We will call such choice of ensembles a \emph{coherent family}. For example, in the real Fubini-Study we have $\lambda=1$ and $\psi$ is the characteristic function of the unit interval; 
in the Kostlan case $\lambda=1/2$ and $\psi$ is a standard Gaussian (see Figure \ref{figweights} and Example 1 in Section \ref{sec:Examples}).
\begin{figure}
\includegraphics[width=0.6\textwidth]{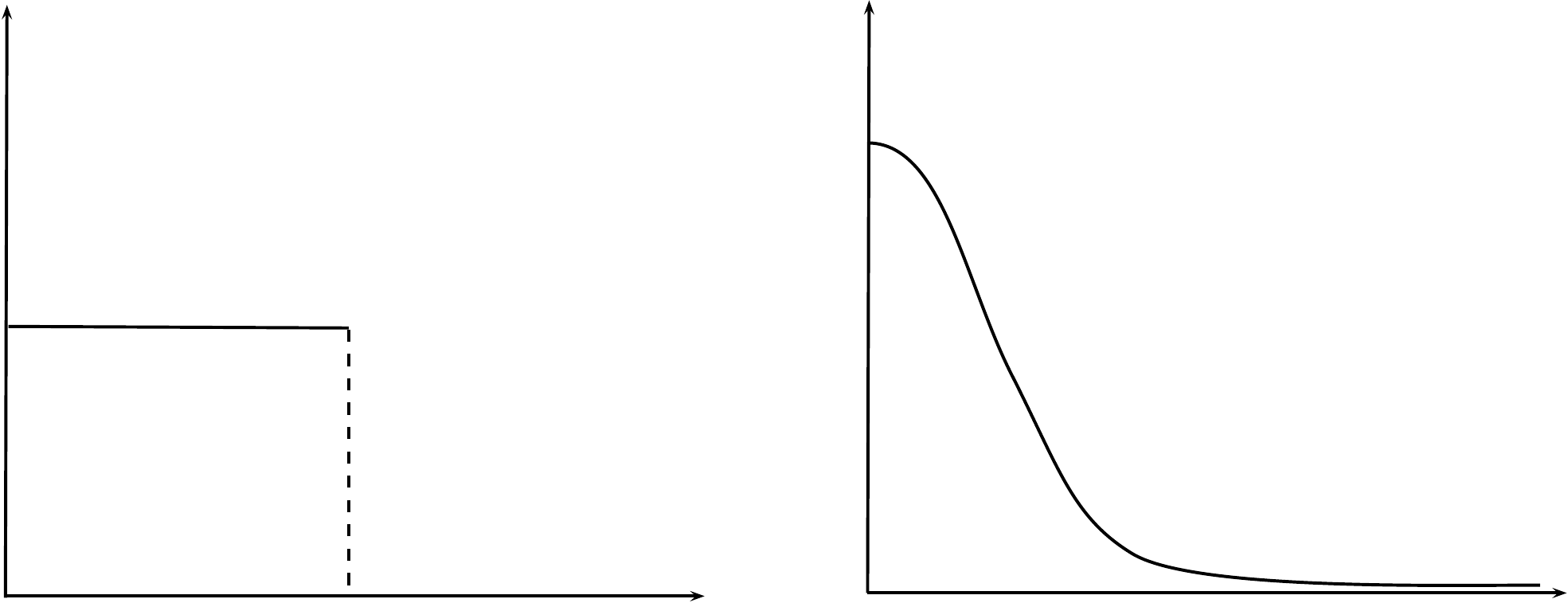}
\caption{The rescaled eigenweights for the RFS ensemble is $\psi=\chi_{[0,1]}$ (left); for the Kostlan ensemble $\psi(x)$ is a Gaussian function (right).}
\label{figweights}
\end{figure}

Given a coherent family of ensembles, 
restrict $f\in W_{n,d}$ to a fixed projective line $\RP^1$,
and consider the number of points in the set $X\cap \RP^1$.
This is the classical, and well-understood, problem of counting the number of real zeros of a univariate polynomial.
The following theorem relates the asymptotic for this number of points to the number of connected components of $X.$

\begin{thm}[Slice sampling]\label{main}
Let $X\subset \RP^n$ be a random hypersurface of degree $d$ whose defining polynomial $f\in W_{n,d}$ is sampled from a coherent family of ensembles. 
There exist positive constants $\const_n, \Const_n$ such that 
$$ \const_n \left[ \EE b_0(X\cap \RP^1) \right]^n \leq \EE b_0(X) \leq  \Const_n \left[ \EE b_0(X\cap \RP^1) \right]^n .$$
\end{thm}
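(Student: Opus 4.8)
The plan is to prove the two inequalities separately, since they require genuinely different techniques, as the abstract already signals.

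\medskip

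\textbf{Upper bound.} The idea is to bound $b_0(X)$ by a quantity that is easier to compute in expectation, namely the number of critical points of a generic linear projection (or of a Morse function) restricted to $X$. Concretely, choosing a generic linear projection $\pi\colon \RP^n \dashrightarrow \RP^1$ and letting $b_0(X) \leq \#\{\text{local extrema of }\pi|_X\}$ (each component contributes at least one max and one min of the height function in an affine chart, so $b_0(X) \leq \tfrac12 \#\{\text{critical points}\}$), one reduces to computing the expected number of solutions of a random system of $n$ equations: $f = 0$ together with the $n-1$ minors expressing that $\nabla f$ is proportional to the projection direction. By the Kac--Rice formula this expectation becomes an integral over $\RP^n$ of a conditional expectation of a Jacobian determinant, and invariance of the ensemble makes the integrand constant, so it reduces to a single Random Matrix Theory computation at a point. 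This is exactly where the methods of \cite{Fyodorov} enter: the relevant random matrix is (a bordered version of) the Hessian of $f$ conditioned on $f=0$ and $\nabla f$ pointing in a fixed direction, and one needs the expectation of $|\det|$ of an $(n-1)\times(n-1)$ symmetric block plus lower-rank corrections. One then needs to show this grows like $\big[\EE b_0(X\cap\RP^1)\big]^n = \Theta(d^{\lambda n})$ under the coherence hypothesis, which amounts to tracking how the spectral parameters of that random matrix scale with $d$ through the moments $\sum_\ell p_d(\ell)^2 \ell^{2k}$, controlled by $\psi$ via the coherence assumption.

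\medskip

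\textbf{Lower bound.} Here the plan is to adapt the \emph{barrier method} of \cite{NazarovSodin} as used in \cite{LerarioLundberg}. Cover $\RP^n$ by $\Theta(d^{\lambda n})$ disjoint balls of radius $\sim d^{-\lambda}$ (the natural ``wavelength'' scale, read off from the coherence exponent $\lambda$). In each ball one wants, with probability bounded below independently of $d$, the event that $f$ restricted to that ball has a sign-definite behavior on the boundary sphere but changes sign inside, which by a topological argument forces $X$ to have a component entirely contained in the ball; since the balls are disjoint, summing gives $\EE b_0(X) \geq \const_n \cdot d^{\lambda n}$. To produce such a ``barrier'' configuration one rescales $f$ around the center of each ball; the rescaled field converges (in an appropriate local sense, using the coherence assumption and the translation/rotation invariance) to a limiting Gaussian field on $\RR^n$ whose covariance kernel is determined by $\psi$, and one needs: (i) that this limit field puts positive probability on the desired barrier event (nondegeneracy of $\psi$, i.e. $\psi \not\equiv 0$), and (ii) enough quantitative control — in terms of $C^1$ or $C^2$ norms on the ball — to transfer the event from the limit to $f$ for all large $d$, uniformly over the balls, again exploiting invariance so that all balls look alike.

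\medskip

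\textbf{Main obstacle.} The genuinely hard part is the upper bound. The lower bound is a fairly mechanical adaptation of an existing barrier argument once the local scaling limit is identified. But the upper bound requires (a) setting up the Kac--Rice computation for critical points of a projection on a projective hypersurface with the correct conditioning, and (b) extracting the precise order $d^{\lambda n}$ — not merely an exponential bound — from the asymptotics of the expected modulus of a determinant of a large structured random matrix whose entry variances depend on the weights $p_d(\ell)$. Matching the exponent $\lambda n$ on both sides, so that $\EE b_0(X)$ is squeezed between two constant multiples of $\big[\EE b_0(X\cap\RP^1)\big]^n$, is where the coherence hypothesis on $\psi$ must be used most carefully, and keeping the implied constant $\Const_n$ explicit (as promised for the real Fubini-Study case) means the RMT asymptotics have to be done with uniform, quantitative error control rather than bare leading-order asymptotics.
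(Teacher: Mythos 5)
Your overall architecture (upper bound by a critical-point count, lower bound by a barrier argument at scale $d^{-\lambda}$, and the identification $\EE b_0(X\cap\RP^1)=\Theta(d^{\lambda})$) matches the paper's, but both halves follow a genuinely different route, and in each case the step you defer as ``a computation'' is exactly where the paper does something different in order to avoid it. For the upper bound, the paper does \emph{not} count critical points of a projection restricted to $X$: it bounds $b_0$ by the number of extrema of $f|_{S^n}$ itself, using the elementary observation that each nodal component bounds a region in whose interior $f$ attains a max or a min (Proposition \ref{prop:criticalbound}). This requires no conditioning on $f=0$, and the expected number of minima of a rotationally invariant field on the sphere is then given by the ready-made closed formula of Theorem \ref{thm:fyodorov} from \cite{Fyodorov}, depending only on $B=\frac{F''(1)-F'(1)}{F''(1)+F'(1)}$, whose asymptotics $(1-B)^{-1}=\Theta(d^{2\lambda})$ follow from the coherence assumption via Lemma \ref{lemma:rescaling}. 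Your route --- Kac--Rice for the tangency system $f=0$, $\nabla f$ parallel to a fixed direction, with a conditioned bordered-Hessian determinant --- is plausible in order of magnitude but is a substantially harder computation that you have not carried out, and it has an unaddressed wrinkle: a projection $\RP^n\dashrightarrow\RP^1$ is not a real-valued function and is undefined on a codimension-two center which components of $X$ can meet when $n\geq 3$, so the ``two critical points per component'' bound needs to be set up on the sphere with a genuine height function. The misattribution matters here: the result of \cite{Fyodorov} that the paper uses is for extrema of the field itself, not for critical points of a map restricted to the random zero set.

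For the lower bound, your plan of rescaling around each center, passing to a limiting Gaussian field on $\RR^n$ with covariance determined by $\psi$, and transferring the barrier event back by $C^1$/$C^2$ control is essentially the coupling strategy of \cite{Sodin}, which the paper deliberately avoids: it constructs an explicit finite-$d$ barrier polynomial $B_x$ of unit norm from the band $\ell\in[Ad^{\lambda},Bd^{\lambda}]$ of spherical harmonics, shows $B_x(x)\geq c_1 d^{\lambda(n-2)/2}$ and $B_x\leq -c_1 d^{\lambda(n-2)/2}$ on $\partial D(x,r)$ with $r$ tuned to the first minimum of the Bessel function $J_{(n-2)/2}$, and controls $\EE\max_{\partial D(x,r)}|f|$ by a sup-norm estimate for random spherical harmonics together with Markov's inequality. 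Your step (i) --- that the limit field gives the barrier event positive probability ``by nondegeneracy of $\psi$'' --- is precisely where this work lives: positive probability of the barrier event requires exhibiting a barrier-type element of the (rescaled) ensemble, and that is the Jacobi/Bessel asymptotics construction; it does not follow from $\psi\not\equiv 0$ alone. Finally, the theorem as stated also needs the slice computation $\EE b_0(X\cap\RP^1)=\Theta(d^{\lambda})$, which you assert parenthetically; the paper proves it through the Edelman--Kostlan parameter $\delta(f)$ and Lemma \ref{lemma:rescaling}, and this is a short but necessary piece of the argument rather than a remark.
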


In fact, using the  $\lambda>0$ in the definition of the coherent family of ensembles, we will first prove the existence of positive constants
$\hat{c}_n, \hat{C}_n$ such that
\begin{equation}\label{ord}
 \hat{c}_n d^{\lambda n} \leq \EE b_0(X) \leq  \hat{C}_n d^{\lambda n},
\end{equation}
and then we will compute $\EE b_0(X\cap \RP^1)$, proving that it has order $d^\lambda.$

Note that for the real Fubini-Study $\lambda=1$ and for the Kostlan case $\lambda=1/2$: in this way one recovers the results from \cite{GaWe3} and \cite{LerarioLundberg}.

The ideas from this paper run parallel with the more sophisticated \cite{Sodin},
where general Riemannian manifolds are considered. 
In fact, it is possible to prove, under slightly different assumptions,
that $\EE b_0(X) / d^{\lambda n}$
approaches a limit using \cite[Theorem 4]{Sodin}.
The coherence condition we imposed is replaced with a rescaling condition on the covariance structure $K_d$ of the random function. 
More precisely one defines
$$K_d(x,y)=\EE\{f(x)f(y)\}\quad x,y\in S^n,$$
and, letting $\phi:T_xS^n\to S^n$ be the exponential map, 
Nazarov and Sodin require (see Section \ref{sectionNS} for a precise formulation) that for some $0<\lambda\leq 1$:
$$K_d(\phi(d^{-\lambda}u), \phi(d^{-\lambda}v))\quad \textrm{converges uniformly on compact sets.}$$
Since in this case 
$$K_d(x,y)=\sum_{d-l\in 2\mathbb{N}}p_d(\ell)^2d(n, \ell)\tilde{C}_\ell^{\frac{n-1}{2}}(\cos \theta(x,y)),$$ 
where $\tilde{C}_\ell^{\frac{n-1}{2}}$ is a Gegenebauer polynomial, one notices some similarity between the two rescaling limits; in fact the Nazarov-Sodin rescaling condition will be satisfied in our case (see Section \ref{sectionNS} below).

The method in \cite{Sodin} uses a careful process of 
``coupling'' the original function to the auxilary function defined in $\RR^n$,
followed by a high-ground approach to studying the random function in $\RR^n$ 
using such general principles as Wiener's ergodic theorem.
Although the independent approach we take is specialized to 
invariant ensembles on the sphere,
it is more direct since we work with the original random function in the intrinsic coordinates,
and the proofs of upper and lower bounds in (\ref{ord}) are based on rather explicit constructions
that can readily yield quantitative estimates for the implied constants (see equation \eqref{constants2} below or Section \ref{sec:Examples}).
Another motivation for our more specialized setting is that
the geometrically appealing idea of ``slice sampling'' seems to lack any analogy in the general setting of Riemannian manifolds. 
We note that this aspect of our study was inspired by work on the expected volume of $X$ \cite{Kostlan} (or the Euler characteristic for $n$ odd \cite{Burgisser}) where it is shown that:
$$\EE\textrm{Vol}(X)=\textrm{Vol}(\RP^{n-1})\cdot \EE[\textrm{Vol}(X\cap \RP^1)],$$
revealing that the Kac problem on a one-dimensional slice determines the outcome (with an exact formula and explicit constant).

Moreover we believe that in this specific invariant case, where Kostlan's classification is available, 
it is natural to ask how to read the answer directly from the choice of the weights $p_d$.

For example, using our technique, we are able to prove the following upper bound for the so called Nazarov-Sodin constant \cite{Sarnak2}, i.e. the leading coefficient of $\EE b_0(X)$ as in \eqref{ord}.
In order to formulate the result, we define, for every integrable function $\psi:[0, \infty)\to \RR$ with subgaussian tail, the numbers (moments of $\psi^2$): 
$$\mu_{k}(\psi^2)=\int_{0}^{\infty}\psi(x)^2 x^{k} dx,\quad k\in \mathbb{N}.$$
The upper bound we provide is through the expected number of minima $\N_m$ on a half sphere of the random function defining $X$,
for which we provide a precise asymptotic (not just an upper bound).
This is of independent interest as $\N_m$ provides additional data about the ``landscape'' (graph of $f$).
\begin{thm}[Extrema of invariant random fields]\label{estimates}
There exist (explicit) constants $c_1, c_2, c_3>0$ 
such that for every random coherent hypersurface $X$ with rescaling exponent $\lambda$ and rescaling limit weight $\psi(x)$ we 
have:
\begin{equation}\label{constants1}
\lim_{d \to \infty} \frac{\EE b_0(X)}{d^{\lambda n}}\leq \lim_{d \to \infty } \frac{2 \EE \N_m}{d^{\lambda n}} \sim c_1\left(\frac{\mu_{n+3}(\psi^2)}{\mu_{n+1}(\psi^2)}\right)^\frac{n}{2}n^{-\frac{n}{2}-c_2}e^{-n+c_3 \sqrt{n}},
\end{equation}
where the asymptotic
is as $n \rightarrow \infty$.
\end{thm}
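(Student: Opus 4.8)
\medskip
\noindent\emph{Sketch of the intended proof.} The statement contains two assertions: the inequality between the two limits, which is geometric, and the asymptotics as $n\to\infty$ of $\lim_{d\to\infty}2\EE\N_m/d^{\lambda n}$, which rests on the Kac--Rice formula together with Random Matrix Theory and is where the real work lies.

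For the inequality, the plan is to bound $\EE b_0(X)$ by the expected number of extrema of $f|_{S^n}$ on an open hemisphere $H\subset S^n$, up to a term of lower order in $d$. Deleting a generic hyperplane identifies $\RP^n\setminus\RP^{n-1}$ with $H$, so the components of $X$ that avoid $\RP^{n-1}$ correspond bijectively to the components of $\{f|_H=0\}$ that are relatively compact in $H$, while the components meeting $\RP^{n-1}$ number at most $b_0(X\cap\RP^{n-1})$; since the restriction of an invariant ensemble to a hyperplane is again an invariant ensemble on $\RP^{n-1}$, the upper bound in \eqref{ord} (in dimension $n-1$) gives $\EE b_0(X\cap\RP^{n-1})=o(d^{\lambda n})$, and this term may be discarded. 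For the compact part let $Z$ be the union of the relatively compact components of $\{f|_H=0\}$: each of them separates $H\cong\RR^n$ (Alexander duality over $\ZZ/2$), so the region-adjacency graph of $H\setminus Z$ is a tree and the number of its bounded regions equals $b_0(Z)$; every such region $W$ has $\partial W\subset\{f=0\}$, hence the extremum of $f$ attained on $\overline W$ is a positive local maximum or a negative local minimum of $f|_H$, and distinct regions carry distinct critical points. Therefore $b_0(Z)$ is at most the number of maxima plus minima of $f$ in $H$; taking expectations and using the $f\mapsto-f$ symmetry of the ensemble, $\EE b_0(Z)\le 2\EE\N_m$, and adding back the negligible boundary term gives $\EE b_0(X)\le 2\EE\N_m+o(d^{\lambda n})$, i.e. the first inequality of \eqref{constants1}.

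To evaluate $\EE\N_m$ I would apply the Kac--Rice formula for the minima of $f|_{S^n}$,
\[
\EE\N_m=\int_{H}\EE\Bigl[\bigl|\det\mathrm{Hess}_x f\bigr|\,\mathbf{1}\{\mathrm{Hess}_x f\succ 0\}\ \big|\ \nabla f(x)=0\Bigr]\,\varphi_{\nabla f(x)}(0)\,dx,
\]
with $\varphi_{\nabla f(x)}$ the Gaussian density of $\nabla f(x)$; by $O(n+1)$-invariance the integrand is independent of $x$, so $\EE\N_m=\tfrac12\mathrm{Vol}(S^n)$ times its value at a fixed point $p$. There isotropy forces $\nabla f(p)$ to be independent of $(f(p),\mathrm{Hess}\,f(p))$ with $\nabla f(p)\sim N(0,\sigma_1^2 I_n)$, while the covariance of $\mathrm{Hess}\,f(p)$ has the isotropic form $a\,\delta_{ij}\delta_{kl}+b(\delta_{ik}\delta_{jl}+\delta_{il}\delta_{jk})$; conditionally $\mathrm{Hess}\,f(p)$ is then a (possibly non-standardly normalized) $\mathrm{GOE}_n$ matrix plus a Gaussian multiple of the identity, and $\N_m$ counts it being positive definite, weighted by $|\det|$. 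After the rescaling $x\mapsto\phi(d^{-\lambda}u)$ the discrete sums over $\ell$ producing the variances of $\nabla f$ and of $\mathrm{Hess}\,f$ converge, up to powers of $d$ that combine to $d^{\lambda n}$, to quantities proportional to $\mu_{n+1}(\psi^2)$ and $\mu_{n+3}(\psi^2)$ respectively — the extra two units in the index recording the two additional derivatives — so the computation reduces to
\begin{gather*}
\lim_{d\to\infty}\frac{2\,\EE\N_m}{d^{\lambda n}}=A_n\left(\frac{\mu_{n+3}(\psi^2)}{\mu_{n+1}(\psi^2)}\right)^{\!n/2}J_n,\\
J_n=\int_{\RR}\EE_{\mathrm{GOE}_n}\!\bigl[\,|\det(G-tI)|\,\mathbf{1}\{G\succ tI\}\,\bigr]\,\frac{e^{-t^2/2v_n}}{\sqrt{2\pi v_n}}\,dt,
\end{gather*}
where $A_n$ and $v_n$ are explicit (involving $\mathrm{Vol}(S^n)$, Gaussian normalizations and rational functions of $n$), the exponent $n/2$ on the moment ratio comes from the $n\times n$ Hessian determinant, and the Fyodorov-type formulas of \cite{Fyodorov} evaluate $\EE_{\mathrm{GOE}_n}[\,|\det(G-tI)|\,\mathbf{1}\{G\succ tI\}\,]$ in closed form.

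Finally, the $n\to\infty$ asymptotic in \eqref{constants1} follows from a Laplace/steepest-descent analysis of $J_n$ combined with Stirling's formula for $A_n$: the integrand of $J_n$ concentrates near the left edge of the semicircle, $t\approx-\sqrt{2n}$, where $\{G\succ tI\}$ ceases to be a large-deviation event, and evaluating there (with the Gaussian correction around the saddle) together with $\mathrm{Vol}(S^n)\sim(\mathrm{const})^n n^{-n/2}e^{n/2}$ produces $c_1\,n^{-n/2-c_2}e^{-n+c_3\sqrt n}$ with explicit $c_1,c_2,c_3$, the factor $(\mu_{n+3}(\psi^2)/\mu_{n+1}(\psi^2))^{n/2}$ riding along untouched since $\psi$ is fixed. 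The main obstacle is precisely this last step — locating the saddle, controlling the correction around it, and carrying every $n$-dependent constant through $A_n$ and $J_n$ so as to pin down $c_1,c_2,c_3$ exactly; the geometric reduction and the Kac--Rice step are conceptual but, modulo the invariant-ensemble covariance bookkeeping, routine.
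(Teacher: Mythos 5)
Your overall strategy coincides with the paper's: bound $b_0(X)$ by (twice) the expected number of minima of $f$ on a half-sphere using a geometric ``each nodal region carries an extremum'' argument plus the $f\mapsto -f$ symmetry, evaluate $\EE\N_m$ by Kac--Rice and the conditional-GOE structure of the Hessian of an invariant field, use the coherence rescaling (Lemma \ref{lemma:rescaling}) to turn the variances of $\nabla f$ and of the Hessian into the moments $\mu_{n+1}(\psi^2)$ and $\mu_{n+3}(\psi^2)$, and finish with large-$n$ asymptotics of the resulting random-matrix integral. The paper's version of your geometric step is Proposition \ref{prop:criticalbound}: it works on the double cover $\overline X\subset S^n$ via \eqref{cover} and a stereographic projection, so every component in the chart is automatically compact and no boundary correction arises; your hemisphere chart instead forces you to discard the components meeting $\RP^{n-1}$, and your one-line claim that $\EE b_0(X\cap\RP^{n-1})=o(d^{\lambda n})$ needs care: the restricted ensemble is invariant with the \emph{same} $\beta_k$'s in \eqref{covariance1} (this is the content of Lemma \ref{lemma:comparisonparameter}), hence the same parameter $B$, so the upper-bound machinery applies in dimension $n-1$; but ``the restriction is again coherent'' is not literally what you get from the definition of the weights, so this step should go through the $\beta_k$'s (or through $B$) rather than through coherence.

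Two genuine gaps remain. First, the left-hand side of \eqref{constants1} is asserted as a \emph{limit}, and nothing in your argument shows that it exists: your reduction only bounds a limsup. The paper obtains existence from the Nazarov--Sodin theorem after verifying in Section \ref{sectionNS} that the coherence assumption implies their rescaling condition; you need this input (or a substitute for it). Second, the quantitative heart of the statement is not carried out. The paper does not rederive the random-matrix formula: it quotes Theorem \ref{thm:fyodorov} (the closed form \eqref{minima} in terms of the GOE$(n+1)$ largest-eigenvalue law), combines it with Proposition \ref{B:asympt}, which gives $(1-B)^{-1}\sim\frac{d^{2\lambda}}{2(n+2)}\frac{\mu_{n+3}(\psi^2)}{\mu_{n+1}(\psi^2)}$, and then cites \cite[Eq.~82]{Fyodorov} for $I_{n+1}\sim A\,2^{35/16}(n+1)^{-17/36}e^{-(n+1)+\frac{4\sqrt2}{3}\sqrt{n}}$ as in \eqref{asI}; it is exactly this cited asymptotic that produces the explicit $c_1,c_2,c_3$ (the $\zeta'(1)$ in $A$ comes from the Tracy--Widom edge analysis you allude to). Your saddle-point picture --- the positive-definiteness constraint becoming typical only at the spectral edge, the Gaussian weight costing $e^{-n}$ there, and the edge fluctuations supplying the $e^{c\sqrt n}$ correction --- is the correct mechanism, but as written the constants are not pinned down, and this is by far the heaviest step of the whole theorem; you should either carry out that edge analysis in detail or, as the paper does, invoke \cite{Fyodorov} for both the exact formula and its $n\to\infty$ evaluation.
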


Here and in what follows, the notation $f(n)\sim g(n)$ means $\lim_{n\to \infty}\frac{f(n)}{g(n}=1$.

The existence of the first limit is a corollary of the fact that our rescaling assumption implies the one in \cite{Sodin} (see Section \ref{sectionNS} below); 
the asymptotic of the second limit follows directly from Theorem \ref{thm:fyodorov}.
For example, in the real Fubini-Study case one can write: 
$$\EE{b_0(X)}=a_{n}d^n+o(d^{n})\quad \textrm{(real Fubini-Study)}.$$  

Now the rescaling limit equals $\psi=\chi_{[0,1]}$, the moments of $\psi^2$ in Theorem \ref{estimates} are computable and we obtain super-exponential decay of $a_n$ (see Example \ref{RFSex} below):
\begin{equation}\label{constants2}
a_n \leq C_1n^{-\frac{n}{2}-c_2}e^{-n+c_3 \sqrt{n}}.
\end{equation}

For the case $n=2$ of curves we obtain asymptotically $\EE b_0(X)\leq \frac{d^2}{3\sqrt{3}}$ for the RFS model and $\EE b_0(X)\leq \frac{2d}{\sqrt{3}}$ for the Kostlan one (the $n$-dimensional analogue of \eqref{constants2} for the Kostlan distribution is discussed in example \ref{Kostlanex} below).


We note that the statement (\ref{constants2}) cannot be deduced from any a priori bound for the number of connected components of $X$. In fact, since $\EE{b_0(X)}$ has maximal order in $d$ for the RFS model,
it is interesting to compare (\ref{constants2}) to what is known for the \emph{maximal} number of connected components of a hypersurface of degree $d$ in $\RP^n$. 
Thom-Smith's inequality (see \cite[Appendix]{Wilson}) implies that:
$$\beta_0(n,d)=\max\{ b_0(X)\,|\,\textrm{$X$ is a smooth hypersurface of degree $d$ in $\RP^n$}\}\leq d^n+O(d^{n-1}).$$
F. Bihan \cite{Bihan} has proved that indeed:
$$\lim_{d\to \infty}\frac{\beta_{0}(n,d)}{d^n}=\xi_{0,n}\quad\textrm{with}\quad  \frac{1}{2^{n-1}}\leq \xi_{0,n}\leq 1,$$
but it is not known whether the sequence $\xi_{0, n}$ converges to zero. 
Thus, the estimate \eqref{constants2} provides an interesting comparison:
$$ (a_n)^{1/n} \leq (1+o(1)) \frac{e}{\sqrt{n}}, \quad  \frac{1}{2} \leq (\xi_{0,n})^{1/n} \leq 1 .$$
A similar result is obtained for the leading order coefficient (in $d$) for the Kostlan distribution in \cite{GaWe2}
(but in that case $\EE b_0(X) \sim a'_n d^{n/2})$ does not have maximal order in $d$.

\subsection{Comments on the proof}
The statement of Theorem \ref{main}, in the form of  equation \eqref{ord}, requires to establish, for a coherent family of ensembles, the existence of two positive constants $c_n, C_n>0$ such that:
$$\hat c_nd^{\lambda n}\leq \EE b_0(X)\leq \hat C_n d^{\lambda n}.$$
The lower bound is obtained by a careful modification of the barrier method, introduced in \cite{NazarovSodin} for random harmonics on $S^2$ 
and adapted in \cite{LerarioLundberg} to random homogeneous polynomials. 
Here the barrier function will depend on the rescaled limit $\psi$ and the rescaling factor $d^\lambda$.

The upper bound uses ideas from random matrix theory
and the fact that one can bound the number of components of $X$ by the number of extrema of the random function $f|_{S^n}$
: roughly speaking, ``inside'' each component $f$ attains either a maximum or a 
minimum.
Recently the methods of finding the mean number of critical points
of any index has been considerably developed for several classes of
rotationally-invariant Gaussian random functions \cite{ABAC, ABA, Fyodorov2, Fyodorov3, Nicolaescu1, Nicolaescu2}. 
In particular, an explicit closed-form expression for the mean total number of
minima for such functions on a sphere has been derived in the lecture notes \cite{Fyodorov}
which may serve as an informal introduction to the subject. 
In our case, the coherence assumption together with the condition that our Gaussian field is polynomial 
ensure that we can compute asymptotics for the formulas presented in \cite{Fyodorov}. 
The same is true for the number of all critical points of $f|_{S^n}$, suggesting the use of standard Kac-Rice formulas, 
but this overcount yields an extra factor that grows exponentially (the average number of minima becomes a fraction of all critical points which is exponentially small, see \cite{Fyodorov}).

Concerning the number of minima itself we note the following critical point counterpart of Theorem \ref{main}.

\begin{prop}\label{prop:minima}Let $f\in W_{n,d}$ be a random polynomial sampled from a coherent family of ensembles. 
Then there exists a constant $c_n'>0$ (depending on the coherent family) such that as $d \rightarrow \infty$:
$$
\EE\#\{\textrm{\emph{minima of} $f$}\}\sim c'_n\left(\EE \#\{\textrm{\emph{minima of} $f|_{S^1}$}\}\right)^n .$$
\end{prop}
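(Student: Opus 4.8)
The plan is to evaluate, as $d\to\infty$, the exact order of growth of each side together with its leading coefficient, and then to define $c_n'$ as the ratio of the two leading coefficients. For the left-hand side, I would apply the Kac--Rice formula to the random function $f|_{S^n}$: since the ensemble is invariant under $O(n+1)$, the Kac--Rice density is constant along $S^n$, so $\EE\#\{\text{minima of }f\}$ equals $\mathrm{vol}(S^n)$ times the density at a fixed point $x_0$. Conditioning on $\{\nabla f(x_0)=0\}$, that density is a Gaussian weight in $t=f(x_0)$ times the conditional expectation of $|\det\mathrm{Hess}\,f(x_0)|$ on the event that this Hessian is positive definite; after the standard normalisation the conditional Hessian is a GOE-type random matrix shifted by a multiple of the identity depending on $t$, which is exactly the situation handled by the random-matrix computation of \cite{Fyodorov}. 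Invoking Theorem \ref{thm:fyodorov}, and using the coherence hypothesis $p_d(d^{\lambda}x)d^{\lambda}\to\psi(x)$ to guarantee that the rescaled covariance data of $f$ entering the integrand converge, one gets
\[
\EE\#\{\text{minima of }f\}=A_n\,d^{\lambda n}\bigl(1+o(1)\bigr),\qquad A_n=\lim_{d\to\infty}\frac{2\,\EE\N_m}{d^{\lambda n}}>0,
\]
the quantity whose $n\to\infty$ asymptotics are given by \eqref{constants1}; finiteness and positivity of $A_n$ follow from $\psi$ being nonzero, integrable and subgaussian, so that all moments $\mu_k(\psi^2)$ are finite and strictly positive.

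For the right-hand side, I would fix a great circle $S^1\subset S^n$. By invariance, $g:=f|_{S^1}$ is a centered stationary Gaussian process on $S^1$ whose covariance is the restriction of $K_d$ to the circle, a cosine polynomial $\sum_{d-\ell\in 2\mathbb{N}}w_d(\ell)\cos(\ell\theta)$ with nonnegative weights $w_d(\ell)$ built from $p_d(\ell)^2$, the dimensions $d(n,\ell)$ and the normalised Gegenbauer values. On a circle minima and maxima alternate, so $\#\{\text{minima of }g\}=\tfrac12\#\{\text{zeros of }g'\}$, and the classical Kac--Rice formula for a stationary process gives $\EE\#\{\text{minima of }g\}$ equal to a universal constant times $\sqrt{\lambda_4/\lambda_2}$, where $\lambda_{2k}=\sum_\ell\ell^{2k}w_d(\ell)$ are the even spectral moments of $g$. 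Writing these sums as Riemann sums in the rescaled variable $\ell=d^{\lambda}x$ and using the coherence hypothesis together with the polynomial growth in $\ell$ of $d(n,\ell)$ and of the relevant Gegenbauer coefficients, one finds that $\lambda_2$ and $\lambda_4$ are asymptotic to positive multiples of $d^{\lambda n}\mu_{n+1}(\psi^2)$ and $d^{\lambda(n+2)}\mu_{n+3}(\psi^2)$ respectively, hence
\[
\EE\#\{\text{minima of }f|_{S^1}\}=B_n\,d^{\lambda}\bigl(1+o(1)\bigr),\qquad B_n=\kappa_n\sqrt{\tfrac{\mu_{n+3}(\psi^2)}{\mu_{n+1}(\psi^2)}}>0,
\]
with $\kappa_n>0$ an explicit constant depending only on $n$ (consistently with the appearance of $(\mu_{n+3}(\psi^2)/\mu_{n+1}(\psi^2))^{n/2}$ in \eqref{constants1}).

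Setting $c_n':=A_n/B_n^{\,n}>0$ and combining the two asymptotics, I would conclude
\[
\EE\#\{\text{minima of }f\}=A_nd^{\lambda n}(1+o(1))=c_n'\bigl(B_nd^{\lambda}\bigr)^n(1+o(1))=c_n'\bigl(\EE\#\{\text{minima of }f|_{S^1}\}\bigr)^n(1+o(1)),
\]
which is the claim. The one-dimensional count is routine; the main obstacle is the asymptotic evaluation of the multidimensional Kac--Rice integral on $S^n$, since extracting the leading $d^{\lambda n}$ term requires controlling the conditional expectation of $|\det\mathrm{Hess}|$ on the positive-definiteness event for a shifted GOE-type matrix --- precisely the random-matrix analysis of \cite{Fyodorov} recorded in Theorem \ref{thm:fyodorov}. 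In our setting the only point still to verify is that the coherence hypothesis forces the rescaled covariance data of $f$ (its variance and its second- and fourth-order spectral moments in the directions tangent and normal to $S^n$) to converge, and that the resulting limit constant $A_n$ is strictly positive; this reduces once more to the nondegeneracy of $\psi$.
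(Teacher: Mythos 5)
Your proposal is correct in substance, but it takes a more computational route than the paper. The paper's proof rests on an exact, non-asymptotic observation (Lemma \ref{lemma:comparisonparameter}): the covariance parameters $\beta_k$ of \eqref{covariance1}, hence the quantity $B$ and the parameter $\delta'(f)$ of \eqref{delta'}, are \emph{literally the same} for $f$ and for $f|_{S^1}$. With that identity in hand, one simply applies the master formula \eqref{minima} of Theorem \ref{thm:fyodorov} twice --- in dimension $n$ and in dimension $1$ --- and uses $B\to 1$, $(1-B)^{-1}=2\delta'(f)^2$ to get $\EE\#\{\text{minima of }f\}\sim 2^{n+\frac32}\bigl(\delta'(f|_{S^1})\bigr)^n I_{n+1}$ and $\EE\#\{\text{minima of }f|_{S^1}\}\sim 2^{\frac52}\delta'(f|_{S^1})\,I_2$; the comparison is then immediate and no separate asymptotic analysis of the circle restriction in terms of $\psi$ is needed. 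You instead evaluate both sides independently through the rescaling limit: Theorem \ref{thm:fyodorov} plus Proposition \ref{B:asympt} for the $n$-dimensional side, and a stationary Kac--Rice count $\sqrt{\lambda_4/\lambda_2}$ on the circle for the one-dimensional side, and then set $c_n'=A_n/B_n^{\,n}$. This works, and your claimed orders $\lambda_2\sim \mathrm{const}\cdot d^{\lambda n}\mu_{n+1}(\psi^2)$, $\lambda_4\sim \mathrm{const}\cdot d^{\lambda(n+2)}\mu_{n+3}(\psi^2)$ are right; note also that, as in the paper, the $\mu_{n+3}/\mu_{n+1}$ dependence cancels in the ratio, so $c_n'$ in fact depends only on $n$. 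One small imprecision to fix: the spectral moments of $f|_{S^1}$ are not $\sum_\ell \ell^{2k}w_d(\ell)$ with $w_d(\ell)$ "concentrated at frequency $\ell$", since each Gegenbauer term $\tilde{C}_\ell^{\frac{n-1}{2}}(\cos\theta)$ spreads over cosine frequencies $m\le\ell$; the clean way is to compute $\lambda_2,\lambda_4$ as derivatives of the restricted covariance at the diagonal, which is exactly what Lemma \ref{lemmaC} supplies (giving $\lambda_2=F'(1)$ and $\lambda_4=3F''(1)+F'(1)$), after which your asymptotics follow from Lemma \ref{lemma:rescaling}. The paper's approach buys brevity and the structurally interesting exact slice identity; yours buys explicit leading constants for each side separately in terms of $\psi$.
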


\subsection{Structure of the paper}
In Section 2, we introduce the basic notation and review Kostlan's classification of invariant ensembles. 
In Section 3 we study the upper bound for the main theorem and the lower bound is presented in Section 4. 
Section 5 is devoted to the reinterpretation of the results in terms of the reduction to the univariate case; in Section 6 we compare our results with \cite{Sodin} and in Section 7 we discuss some examples in detail.

\subsection{Notes}
Concerning the large degree asymptotic, the first breakthrough in this program
came from the work of F. Nazarov and M. Sodin in a related setting of 
random eigenfunctions, where they introduced the so-called \emph{barrier method} \cite{NazarovSodin} 
for studying the number of components of the zero set of random spherical harmonics on the two-dimensional sphere. The possibility to extend this method to homogeneous polynomials and to higher dimensions was disclosed by P. Sarnak in the letter \cite{Sarnak}.
For \emph{Kostlan} hypersurfaces of real projective manifolds this has been accomplished by D. Gayet and J-Y. Welschinger \cite{GaWe3}, and the study of random \emph{Real-Fubini-Study} hypersurfaces in $\RP^n$ was done 
in \cite{LerarioLundberg}. 
Gayet and Welschinger also began a Morse theoretic approach to the problem of bounding Betti numbers 
(i.e. not only the number of connected components) of the zero locus of random real sections of high tensor powers of a holomorphic line bundle on a real projective manifold \cite{GaWe2}.

F. Nazarov and M. Sodin have developed powerful methods for studying translation invariant Gaussian random functions in $\RR^n$ and these methods are applied  to random functions on Riemannian manifolds \cite{Sodin}; 
the sphere and the projective space are particular cases, 
and as we have mentioned the approach from \cite{Sodin} and the one followed in this paper are closely related (see Section \ref{sectionNS} below).
Similar methods were applied by P. Sarnak and I. Wigman to prove the existence of 
limit laws for topologies (including arrangements of components) 
of zero sets of band-limited functions \cite{Sarnak2}.

Focusing on a somehow different asymptotic, the study of the large number of variables was initiated in \cite{Lerario2012}, where the second author introduced a random version of a spectral sequence from \cite{Agrachev, AgrachevLerario} for the study of the topology of a random intersection of quadrics. This study was continued by the last two authors in \cite{LeLu2}, where the
last two authors proved that for a random intersection $X$ of $k$ quadrics in $\RP^n$ and for every Betti number $\lim_{n\to \infty}\EE b_i(X)=1$ (in fact even allowing dependence of $i$ on $n$ the same result still holds as long as $|i-n/2|>n^{\alpha}$,  for some $0<\alpha<1$).  A precise asymptotic for the sum of all Betti numbers was also computed in the case of an intersection of two quadrics.

\subsection*{Acknowledgements}
The authors wish to thank the Institute for Advanced Study at Princeton, where most of this research took place, and M. Sodin for stimulating comments. They also wish to thank the anonymous referee for his/her careful work and suggestions, that allowed to substantially improve the presentation.
The first author was supported by EPSRC grant EP/J002763/1 ``Insights into Disordered Landscapes via Random Matrix Theory and Statistical Mechanics''.
The second author is supported by the European Community's Seventh Framework Programme ([FP7/2007-2013] [FP7/2007-2011]) under grant agreement No. [258204].


\section{Random invariant polynomials}

\subsection{Some notational conventions} Let $f,g:\mathbb{N}\to [0, \infty)$. We write $f\sim g$ for $\lim_{d\to \infty}\frac{f(d)}{g(d)}=1$. Also, we use the notation $f(d)=O(g(d))$ when there exists a constant $c>0$ such that $f(d)\leq c g(d)$ for all $d\in \mathbb{N}$. Similarly $f(d)=\Theta(g(d))$ means that $f(d)=O(g(d))$ and $g(d)=O(f(d)).$

\subsection{A classification} We consider the vector space $W_{n,d}$ of real homogeneous polynomials of degree $d$ in $n+1$ variables:
$$W_{n,d}=\RR[x_0, \ldots, x_n]_{(d)}$$ 
A random \emph{invariant} polynomial is an element of the space $W_{n,d}$ endowed with a Gaussian probability distribution which is invariant by \emph{orthogonal} change of variables. 
E. Kostlan \cite{Kostlan} classified such ensembles
using the well-known 
orthogonal direct sum decomposition:
 \begin{equation}\label{decomposition}
 W_{n,d}=\bigoplus _{d-\ell \in 2\mathbb{N}}\|x\|^{d-\ell}H_{n,\ell},
 \end{equation}
into spaces $H_{n,\ell}$ of spherical harmonics of degree $\ell$ (eigenfunctions of the spherical Laplacian with eigenvalue $-\ell(\ell + n - 1)$).
Choosing an orthogonally invariant Gaussian distribution on $W_{n,d}$ amounts to specifying a scalar product on it which is invariant by orthogonal change of coordinates (then the polynomial is sampled uniformly from the unit sphere for this scalar product).

An invariant scalar product is obtained by choosing ``weights'' for each $H_{n, \ell}$. Denoting by $\{Y_{\ell}^{j}\}_{j\in J_\ell}$ the standard basis of spherical harmonics of degree $\ell$ 
 (which is an orthonormal basis with respect to the $L^2(S^n)$ norm), 
 an invariant random polynomial is exactly a polynomial of the form:
 
 \begin{equation}\label{randomf}
 f=\sum_{d-\ell\in 2\mathbb{N}}{p_d(\ell)}\sum_{j\in J_\ell}\xi_{\ell}^j \hat{Y}_{\ell}^j,\quad {p_d(\ell)}\geq0
 \end{equation}
 where the coefficients $\xi_{\ell}^j$ are independent, centered standard Gaussian (and the $p_{d}(\ell)$ are the ``weights'').  

\subsection{Covariance structure}

The random polynomial defined above is a \emph{polynomial Gaussian field}. It is well known that such a random function is determined by its covariance structure (the Gaussian assumption is fundamental):
$$G(x,y)=\EE\{f(x)f(y)\}.$$
The case when $f$ is invariant produces a covariance structure of the form:
\begin{equation}\label{covariance1}G(x,y)=\sum_{k=0}^{\lfloor \frac{d}{2}\rfloor}\beta_k\|x\|^{2k}\|y\|^{2k}\langle x, y\rangle^{d-2k}.\end{equation}
for some choice of the real numbers $\beta_0, \ldots, \beta_{\lfloor \frac{d}{2}\rfloor}$ (see \cite{Kostlan}). 

We can also write $F(x,y)$ using (normalized) Gegenbauer polynomials:
\begin{equation}\label{covariance2}G(x,y)=\|x\|^d\|y\|^d\sum_{d-\ell\in 2\mathbb{N}}r_{\frac{d-\ell}{2}}^2\tilde{C}_{\ell}^{\frac{n-1}{2}}\left(\frac{\langle x, y\rangle}{\|x\|\|y\|}\right).\end{equation}
Here $\frac{\langle x, y\rangle}{\|x\|\|y\|}$ is the cosine of the angle between $x$ and $y$ and $\tilde{C}_{\ell}^{\frac{n-1}{2}}$ is the classical Gegenbauer polynomial $P_{\ell}^{(\frac{n-1}{2})}$ (see the \cite[Appendix 7.2]{LerarioLundberg} and \cite[Section 4.3]{Kostlan}) normalized such that $\tilde{C}_{\ell}^{\frac{n-1}{2}}(1)=1$. 

In the form \eqref{covariance2} the covariance matrix is nondegenerate; the relation between the $r_i$ and the $\beta_k$ is given by \cite[eq. (7) and (8)]{Kostlan}.

Using the description of a random polynomial given in \eqref{randomf}, we can rewrite the covariance structure as:
\begin{align*}G(x,y)&=\EE\left\{\left(\sum_{d-\ell\in 2\mathbb{N}}{p_d(\ell)} \|x\|^{d-\ell}\sum_{j\in J_\ell}\xi_{\ell}^j Y_{\ell}^j\left(\frac{x}{\|x\|}\right)\right)\left(\sum_{d-\ell\in 2\mathbb{N}}{p_d(\ell)} \|y\|^{d}\sum_{j\in J_\ell}\xi_{\ell}^j Y_{\ell}^j\left(\frac{y}{\|y\|}\right)\right)\right\}\\
&=\sum_{d-\ell\in 2\mathbb{N}}{p_d(\ell)}^2 \|x\|^{d}\|y\|^{d}\sum_{j\in J_\ell} Y_{\ell}^j\left(\frac{x}{\|x\|}\right)Y_{\ell}^j\left(\frac{y}{\|y\|}\right)\\
&= \|x\|^{d}\|y\|^d\sum_{d-\ell\in 2\mathbb{N}}{p_d(\ell)}^2 \frac{d(n,\ell)}{|S^{n}|}\tilde{C}_{\ell}^{\frac{n-1}{2}}\left(\frac{\langle x, y\rangle}{\|x\|\|y\|}\right).
\end{align*}
In the second line we have used the independence of the $\xi_{l}^j$ and the fact that they are standard normals; in the third line we have used the \emph{addition formula} \cite{SW} for spherical harmonics and we have denoted by $d(n,\ell)$ the dimension of $H_{n,\ell}$:
\begin{equation}\label{eq:dimharmonic}d(n,\ell)=\frac{(n+2\ell-1)(n+\ell-2)!}{\ell! (n-1)!}=\Theta(\ell^{n-1}).\end{equation}
As a consequence we obtain the relation between our weights ${p_d(\ell)}$ and the $r_i$ given in \cite{Kostlan}:
\begin{equation}\label{eq:ptor}p_d({\ell})=r_{\frac{d-\ell}{2}}\sqrt{\frac{|S^n|}{d(n,\ell)}}.\end{equation}

\subsection{Coherent ensembles of random polynomials}
In the sequel we will be interested in the average number of components $b_0(X)$ on the sphere 
(or the projective space) of the zero set $X$ of a random invariant polynomial of degree $d$. 
In order to formulate a more precise question, 
we assume some ``coherence'' on the behavior of the weights as $d\to \infty.$

We start by noticing that our statistic, $b_0$, is invariant by dilation (i.e. the zero set of $f$ and of a nonzero multiple of $f$ have the same number of components), thus renormalizing the weights doesn't change our asymptotic (it only affects the simplicity of presentation), 
and we will assume that:
\begin{equation}\label{norm}\sum_{d-\ell\in 2\mathbb{N}}{p_d(\ell)}=1.\end{equation}
The main assumption we make on the coherence of the family of ensembles of random polynomials as $d$ goes to infinity concerns a rescaling limit of our weights. 
More precisely we consider for every $0<\lambda\leq 1$ the sequence of functions $\{P_{d,\lambda}:[0, \infty)\to \RR\}_{d\geq 0}$ defined by:
$$P_{d,\lambda}(x)=p_d(d^\lambda x)d^\lambda.$$
Here, we assume that we have extended $p_d$ to all of $[0,\infty)$ as a step function supported on $[0,d]$;
namely, over an interval $(k,k+1)$ in this range, the value of $p_d$ is chosen constant 
and equal to either $p_d(k)$ or $p_d(k+1)$ (only one of which is defined).  

We say that a choice of Gaussian distributions on $W_{n,d}$ is \emph{coherent} if there exists $0<\lambda\leq 1$ such that $P_{d, \lambda}$ converges pointwise (as $d$ goes to infinity) to an integrable \emph{nonzero} function $\psi$, and all the sequence is dominated by a function with a subgaussian tail:
\begin{equation}\label{coherence}P_{d, \lambda}\to \psi \quad \textrm{pointwise and dominated by a subgaussian tailed function.}\end{equation}
In other words we assume all the sequence of functions $P_{d,\lambda}(x)$ is bounded by $c_1e^{-c_2 x^2}$ for some positive constants $c_1, c_2>0.$

\begin{remark}The normalization condition \eqref{norm} requires that we take the same scaling exponent $\lambda$
inside and outside of $p_d$; otherwise one could work as well with non-normalized weights, but should consider the family $p_d(d^{\lambda} x)d^\alpha$ and ensure it rescales to a nonzero integrable function for some $\alpha, \lambda$. 

\end{remark}
The following Lemma will be useful in the sequel.

\begin{lemma}\label{lemma:rescaling}Under the above rescaling assumption there exist $a>0$ and $b>0$
such that we have the asymptotic (as $d \rightarrow \infty$):
$$
 \sum_{d - \ell \in 2 \mathbb{N}} \ell^{a} ({p_d(\ell)})^b \sim \frac{d^{\lambda(a-b+1))}}{2}\int_{0}^{\infty}x^a\psi(x)^bdx,
$$
and the following upper bound:
$$
 \sum_{d - \ell \in 2 \mathbb{N}} \ell^{a} ({p_d(\ell)})^b \leq \frac{ c_1^b \Gamma\left(\frac{a+1}{2}\right)}{(b c_2 )^{\frac{a+1}{2}}} d^{\lambda(a-b+1)}.
$$
\end{lemma}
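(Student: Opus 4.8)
The plan is to rewrite the sum as a Riemann-sum approximation to the integral against the rescaling limit $\psi$, using the definition $P_{d,\lambda}(x) = p_d(d^\lambda x)d^\lambda$. First I would substitute $\ell = d^\lambda x$, so that $p_d(\ell) = P_{d,\lambda}(x)/d^\lambda$ and $\ell^a = d^{\lambda a}x^a$; the summand $\ell^a(p_d(\ell))^b$ becomes $d^{\lambda a} d^{-\lambda b}\, x^a\, P_{d,\lambda}(x)^b = d^{\lambda(a-b)} x^a P_{d,\lambda}(x)^b$. Since consecutive admissible values of $\ell$ (those with $d-\ell\in 2\mathbb N$) differ by $2$, the corresponding $x$-values differ by $2/d^\lambda$, so $\sum_{d-\ell\in 2\mathbb N}$ of a function of $x$ is $\tfrac{d^\lambda}{2}$ times a Riemann sum with mesh $2/d^\lambda$. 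Hence
\begin{equation*}
\sum_{d-\ell\in 2\mathbb N}\ell^a (p_d(\ell))^b = d^{\lambda(a-b)}\cdot \frac{d^\lambda}{2}\sum_{j} x_j^a\, P_{d,\lambda}(x_j)^b\,\Delta x \;\approx\; \frac{d^{\lambda(a-b+1)}}{2}\int_0^\infty x^a\, P_{d,\lambda}(x)^b\,dx,
\end{equation*}
and the claimed asymptotic follows once one justifies $\int_0^\infty x^a P_{d,\lambda}(x)^b\,dx \to \int_0^\infty x^a\psi(x)^b\,dx$. Taking out the factor $d^{\lambda(a-b+1)}/2$ and absorbing the remaining $x^a P_{d,\lambda}^b$ into a single integrand, this is exactly the statement that $\int_0^\infty x^a P_{d,\lambda}(x)^b\,dx\to\int_0^\infty x^a\psi(x)^b\,dx$.

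For that convergence I would invoke dominated convergence: by the coherence hypothesis \eqref{coherence}, $P_{d,\lambda}(x)\to\psi(x)$ pointwise, so $x^a P_{d,\lambda}(x)^b \to x^a\psi(x)^b$ pointwise, and the uniform bound $P_{d,\lambda}(x)\le c_1 e^{-c_2 x^2}$ gives $x^a P_{d,\lambda}(x)^b \le c_1^b\, x^a e^{-b c_2 x^2}$, an integrable dominating function independent of $d$. The value of that dominating integral is computed directly: $\int_0^\infty c_1^b x^a e^{-b c_2 x^2}\,dx = \tfrac{c_1^b}{2}(bc_2)^{-(a+1)/2}\,\Gamma\!\left(\tfrac{a+1}{2}\right)$ by the standard substitution $t = b c_2 x^2$. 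Multiplying by $d^{\lambda(a-b+1)}/2$ would seem to produce the extra factor of $1/2$; in fact the pointwise-dominated bound should be applied directly to the \emph{sum}, comparing $\sum_{d-\ell\in 2\mathbb N}\ell^a p_d(\ell)^b$ against $\sum_{\ell\ge 0}\ell^a p_d(\ell)^b$ (all admissible $\ell$) which is itself $\le \int_0^\infty d^{\lambda(a-b+1)} x^a c_1^b e^{-bc_2 x^2}\,dx$ up to the step-function discretization; the cleanest route is to bound $\sum_{d-\ell\in 2\mathbb N}\ell^a p_d(\ell)^b \le \int_0^\infty (p_d \text{ extended})(\cdot)$ directly, using that the step-function extension of $p_d$ dominates the sum term-by-term by monotonicity considerations, yielding the stated clean constant $c_1^b\Gamma(\tfrac{a+1}{2})(bc_2)^{-(a+1)/2}$ with no factor of $2$ because the integral is over the full range while the sum skips every other index.

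The main obstacle I anticipate is the bookkeeping around the step-function extension of $p_d$ and the parity restriction $d-\ell\in 2\mathbb N$: one must be careful that the extension chosen over $(k,k+1)$ — constant equal to the one defined endpoint value — correctly controls the sum from above (for the upper bound) and approximates it from below and above (for the asymptotic), and that the mesh-$2/d^\lambda$ Riemann sum genuinely converges rather than merely being dominated. A secondary point is that $a$ and $b$ are not arbitrary but must be such that the limiting integral $\int_0^\infty x^a\psi(x)^b\,dx$ is finite and nonzero — the subgaussian domination handles the tail, but one needs $\psi^b$ to be locally integrable against $x^a$, which for $b\ge 1$ follows from $\psi$ being integrable together with the domination (e.g. by interpolation / the bound $\psi\le c_1$), and this is where the hypothesis will actually be used; for the application in the paper $a,b$ are explicit nonnegative numbers with $b\ge 1$, so no genuine difficulty arises. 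Everything else is routine: the substitution, the Gamma-integral evaluation, and dominated convergence.
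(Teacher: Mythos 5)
Your argument is essentially the paper's own proof: both convert the sum to the integral of the step-function extension (a mesh-$2/d^{\lambda}$ Riemann sum), substitute $\ell=d^{\lambda}x$, apply dominated convergence with the subgaussian majorant for the asymptotic, and evaluate the Gaussian moment via the Gamma function for the upper bound. Your worry about the factor of $2$ in the upper bound is harmless, since the route through the dominating integral in fact yields a constant smaller than the (non-sharp) one stated in the lemma, exactly as in the paper.
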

\begin{proof}
First note that twice the sum is equal to 
$$\int_0^\infty \{\ell\}^a p_d(\ell)^b  d\ell + O(1) = \int_0^\infty (\ell^a + O(\ell^{a-2})) p_d(\ell)^b  d\ell,$$
where $\{\ell \}$ denotes the nearest integer to $\ell$ with the same parity as $d$.
We make a change of variables $\ell = d^\lambda x$, $d\ell = d^\lambda dx$:

\begin{align}\label{eq:chvar}
  \int_0^\infty (\ell^a + O(\ell^{a-2})) p_d(\ell)^b  d\ell &= \int_0^\infty ( (d^\lambda x)^a + O((d^\lambda x)^{a-2})) p_d(d^\lambda x)^b (d^\lambda) dx \\
 &= d^{\lambda(a-b+1)} \int_0^\infty (x^a + O(x^{a-2} d^{-2\lambda}) ) (d^\lambda)^b p_d(x d^\lambda)^b  dx \\
 &\sim d^{\lambda(a-b+1)} \int_0^\infty x^a \psi(x)^b dx,
\end{align}
by Lesbesgue's dominated convergence theorem.

Moreover, since the sequence of integrands is dominated by $x^a (c_1e^{-c_2x^2})^b$,
\begin{equation*}
 \int_0^\infty \ell^a p_d(\ell)^b  d\ell\leq \int_{0}^{\infty}x^a(c_1e^{-c_2x^2})^bdx= \frac{c_1^b}{2}\frac{\Gamma\left(\frac{a+1}{2}\right)}{(b c_2 )^{\frac{a+1}{2}}}.
\end{equation*}
This proves the second part of the lemma.
\end{proof}

\section{Upper bounds and random matrix theory}

\subsection{Counting maxima and minima of a Gaussian field}In order to prove an upper bound for the average number of components of: 
$$X=\{[x]\in \RP^n\, |\, f(x)=0\},$$ 
we perform the following preliminary reduction. Consider the variable $x=(x_0, \ldots, x_n)$ on the sphere $S^n$ and $\hat{x}=(x_1, \ldots, x_n)$ on the unit disk $D_n=\{\|\hat{x}\|^2\leq 1\}$  in $\RR^{n}.$ In this way, given the random polynomial $p$ we can define the Gaussian fields $\hat{p}_{\pm}$ on the disk $D_n$ by:
\begin{equation}\label{phat}\hat{p}_{\pm}(\hat{x})=p(\pm \sqrt{1-\|\hat{x}\|^2}, x_1, \ldots, x_n).\end{equation}
We can consider as well the ``double cover'' of $X$, namely the set $\overline{X}\subset S^{n}$ defined by:
\begin{equation}\label{cover}\overline{X}=\{x\in S^n\,|\, f(x)=0\} \quad\textrm{satisfying}\quad b_0(\overline{X})=2b_0(X)-\frac{1+(-1)^{d+1}}{2}.\end{equation}
The correction term is due to the fact when the degree is odd there is one component of $X$ which lifts to one single component of $\overline{X}$ (the reader can think at the case of a projective line lifting to a circle).

\begin{prop}\label{prop:criticalbound}
$$\EE b_{0}(\overline{X})\leq 4\EE \# \{\textrm{minima of $\hat p_{+}$}\}$$
\end{prop}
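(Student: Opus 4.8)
The plan is to bound $b_0(\overline X)$ componentwise. Let $Z$ be a connected component of $\overline X = \{x \in S^n : f(x) = 0\}$. The sphere $S^n$ is covered by the two closed hemispheres $H_\pm = \{x \in S^n : \pm x_0 \ge 0\}$, each identified with the disk $D_n$ via the projection $x \mapsto \hat x = (x_1, \dots, x_n)$; under this identification $f|_{H_\pm}$ corresponds to $\hat p_\pm$ on $D_n$. The key topological observation is that if a component $Z$ of $\overline X$ meets the open hemisphere $\{x_0 > 0\}$, then it separates that hemisphere (more precisely, $Z$ bounds a region inside $H_+$), and consequently $\hat p_+$ attains a local extremum — a maximum or a minimum — in one of the two open regions of $D_n \setminus \{\hat p_+ = 0\}$ adjacent to the corresponding arc of $Z$; a standard argument (the one alluded to in the introduction: ``inside each component $f$ attains either a maximum or a minimum'') shows that each such component forces at least one strict local extremum of $\hat p_+$ lying strictly inside $D_n$. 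Since $\hat p_+$ is (generically, hence almost surely) a Morse function on the open disk with no extrema escaping to $\partial D_n$ coming from this count, distinct components $Z$ that reach into $\{x_0>0\}$ can be charged to distinct extrema of $\hat p_+$ in the interior of $D_n$.

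First I would make this precise: every component $Z$ of $\overline X$ meets at least one of the two open hemispheres $\{x_0 > 0\}$ or $\{x_0 < 0\}$ (the equator $\{x_0=0\}$ has empty interior in $S^n$, and a component cannot live entirely inside it since $f$ restricted to a neighborhood is a nonconstant polynomial almost surely). Hence
\[
b_0(\overline X) \;\le\; \#\{\text{components of }\overline X\text{ meeting }\{x_0>0\}\} \;+\; \#\{\text{components of }\overline X\text{ meeting }\{x_0<0\}\}.
\]
By the extremum-counting argument, the first term is at most $\#\{\text{extrema of }\hat p_+\} = \#\{\text{maxima of }\hat p_+\} + \#\{\text{minima of }\hat p_+\}$, and similarly the second is at most $\#\{\text{extrema of }\hat p_-\}$. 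Now I invoke the symmetry of the ensemble: the distribution of $\hat p_-$ equals that of $\hat p_+$ (the antipodal map $x_0 \mapsto -x_0$ is an orthogonal change of variables preserving the invariant Gaussian law, so $p(x_0,\hat x)$ and $p(-x_0,\hat x)$ are equal in law), and also $\hat p_+$ and $-\hat p_+$ are equal in law (the ensemble is centered Gaussian), which swaps maxima and minima. Therefore $\EE\#\{\text{maxima of }\hat p_+\} = \EE\#\{\text{minima of }\hat p_+\}$ and $\EE\#\{\text{extrema of }\hat p_-\} = \EE\#\{\text{extrema of }\hat p_+\} = 2\,\EE\#\{\text{minima of }\hat p_+\}$. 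Taking expectations in the displayed inequality yields $\EE b_0(\overline X) \le 4\,\EE\#\{\text{minima of }\hat p_+\}$.

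The main obstacle is the topological step: justifying carefully that a component of $\overline X$ meeting an open hemisphere forces a \emph{strict interior} local extremum of $\hat p_\pm$ that can be assigned injectively, and handling the (probability-zero, but still needing a word) degenerate configurations — components tangent to the equator, non-Morse functions, components lying in both hemispheres but only grazing them. The cleanest route is to argue almost surely: with probability one $f|_{S^n}$ is a Morse function on $S^n$, $0$ is a regular value of $f$, and $\overline X$ avoids any fixed measure-zero set such as $\{x_0=0\}$ being non-generically tangent; then each component of $\overline X$ is a smooth hypersurface of $S^n$ which, restricted to the interior of $H_+$ (where it is nonempty), bounds a nonempty open subregion of the disk, and on the closure of a bounded such region the continuous function $\hat p_+$ attains its max and min, at least one of which is not forced to the boundary $\overline X \cap H_+$ and hence is an interior critical point; the over-counting by a factor accounting for components appearing in both hemispheres is absorbed harmlessly into the inequality. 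I would also remark that the same argument gives the sharper-looking bound with maxima and minima of a single $\hat p_+$, and it is only the passage to the symmetric, one-sided statement that produces the constant $4$.
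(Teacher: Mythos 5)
Your final symmetrization step is fine and matches what the paper needs: almost surely no critical point lies on the equator, $\hat p_-$ has the same law as $\hat p_+$, and $-\hat p_+$ has the same law as $\hat p_+$, which converts ``maxima plus minima'' into $4\,\EE\#\{\textrm{minima of }\hat p_+\}$. The genuine gap is the per-hemisphere topological charging. It is \emph{not} true that every component of $\overline X$ meeting the open hemisphere $\{x_0>0\}$ forces an interior local extremum of $\hat p_+$: the bounded-region argument requires the function to vanish on the \emph{entire} boundary of the region, but for a component crossing the equator the adjacent regions of $D_n\setminus\{\hat p_+=0\}$ reach $\partial D_n$, where $\hat p_+$ has no reason to vanish, so the max and min over the closure may be attained on $\partial D_n$ or on the nodal part of the boundary and no interior critical point is forced. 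In fact your intermediate inequality $\#\{\textrm{components meeting }\{x_0>0\}\}\le\#\{\textrm{extrema of }\hat p_+\}$ fails on an open (hence positive-probability) set of polynomials. On $S^2$, take $f|_{S^2}$ close to $q(x_2)$ with $q$ of degree $d$ (parity matching $d$) having $d$ simple roots in $(-1,1)$: the zero set is then $d$ ``latitude'' circles about the axis $e_2$, which lies in the equatorial plane $\{x_0=0\}$, so \emph{every} component crosses the equator and meets both open hemispheres. Perturbing generically (say by a small multiple of $x_0$ times a suitable homogeneous factor), the critical circles break up so that the local extrema coming from the local maxima of $q$ land in one open hemisphere and those coming from the local minima land in the other; then $\hat p_+$ has only about $d/2$ interior extrema while all $d$ components meet $\{x_0>0\}$. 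So the hemisphere-by-hemisphere count is broken, not merely unproven.

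The paper avoids this by doing the charging \emph{globally} on the sphere and splitting by hemisphere only at the level of extrema. Almost surely $0$ is a regular value, $f|_{S^n}$ is Morse, and no critical point lies on the equator; stereographic projection from a point where neither $f$ nor $df$ vanishes gives $g$ on $\RR^n$ with the same components and critical points. Each component $C$ of $\{g=0\}$ is compact and bounds; letting $A(C)$ be the closure of its interior minus the interiors of the components nested inside it, one gets a compact region with $g=0$ on $\partial A(C)$ and $g\not\equiv 0$ inside, hence a local maximum or minimum of $g$ in $\mathrm{int}\,A(C)$; since the sets $\mathrm{int}\,A(C)$ are pairwise disjoint, $b_0(\overline X)\le \#\{\textrm{maxima of }f|_{S^n}\}+\#\{\textrm{minima of }f|_{S^n}\}$. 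Only then does one observe that each such extremum lies (a.s.) in one open hemisphere and is therefore an interior extremum of $\hat p_+$ or of $\hat p_-$, after which your symmetry argument yields the factor $4$. Replacing your hemisphere-wise step by this global innermost-region argument repairs the proof.
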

\begin{proof}Assume that $f=0$ is a regular equation (hence $X$ is smooth): this happens with probability one. Let $x$ be a point on the sphere $S^n$ where $f$ and its differential do not vanish and consider the stereographic projection $s: S^n\backslash \{x\} \to \RR^n$. Then $g=f\circ s^{-1}$ is a smooth function on $\RR^n$, the number of components of its zero set and its critical points correspond through $s$ to those of $f$. Each component $C$ of $Y=\{g=0\}$ is compact and separates $\RR^n$ into two disjoint open sets, one of which is bounded: we call it the \emph{interior} of $C$ and denote it by $I(C)$. For every component $C$ of $Y$ consider the set $A(C)$ defined by taking the closure of $I(C)\backslash \{\textrm{interiors of components of $Y$ that are contained in $I(C)$}\}.$ 
This set $A(C)$ is a compact manifold with boundary; since $g$ is zero on this boundary, it has a maximum and a minimum on $\textrm{int}(A(C))$. 
In particular $b_0(\overline{X})=b_0(\{g=0\})$ is bounded by the number of maxima plus the number of minima of $g$, which is the same as the number of maxima plus the number of minima of $f|_{S^n}.$
Every minimum or maximum of $f$ is a minimum or a maximum for $\hat{p}_+$ or $\hat{p}_-$, except for those critical points lying on the equator $\{x_0=0\}$. 
Since the probability that a critical point of $f|_{S^n}$ is on the equator is zero and $\hat{p}_\pm$ have the same distribution, 
the result follows.
\end{proof}

We recall the following Theorem from \cite{Fyodorov}. 

\begin{thm}\label{thm:fyodorov}Let $\hat{p}:D_n\to \RR$ be a random Gaussian 
field with covariance structure 
that can be expressed in terms of a twice-differentiable univariate function $F$ as follows:
$$\EE\{\hat{p}(\hat{x})\hat{p}(\hat{y})\}=F\left(\langle \hat x, \hat y\rangle+\sqrt{1-\|\hat x\|^2}\cdot \sqrt{1-\|\hat{y}\|^2}\right), \quad \hat{x}, \hat{y}\in D_n,$$
and define the number:
$$B=\frac{F''(1)-F'(1)}{F''(1)+F'(1)}.$$
Then the expectation of the number of minima $\mathcal{N}_m$ of $\hat{p}$ is given by:
\begin{equation}\label{minima}\EE \mathcal{N}_m =(1+B)^{\frac{n+1}{2}}(1-B)^{-\frac{n}{2}}\int_{-\infty}^{+\infty}e^{-\frac{(n+1)Bt^2}{2}}\frac{d}{dt}\mathcal{F}_{n+1}(t)dt\end{equation}
where $\mathcal{F}_{n+1}(t)$ is the probability density for the largest eigenvalue of a $\emph{\textrm{GOE}}(n+1)$ matrix.
\end{thm}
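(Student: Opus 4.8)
The natural route is via the Kac--Rice formula together with the random-matrix description of the Hessian of an isotropic Gaussian field at a critical point. The plan is to write
\begin{equation*}
\EE\,\mathcal{N}_m=\int_{D_n}\EE\!\left[\,\bigl|\det\mathrm{Hess}\,\hat p(\hat x)\bigr|\,\mathbf{1}\{\mathrm{Hess}\,\hat p(\hat x)\succ0\}\ \Big|\ \nabla\hat p(\hat x)=0\right]\rho_{\hat x}(0)\,d\hat x,
\end{equation*}
where $\rho_{\hat x}$ is the Gaussian density of $\nabla\hat p(\hat x)$. The key structural observation is that the argument $\langle\hat x,\hat y\rangle+\sqrt{1-\|\hat x\|^2}\sqrt{1-\|\hat y\|^2}$ is the Euclidean inner product of the unit vectors $(\sqrt{1-\|\hat x\|^2},\hat x)$ and $(\sqrt{1-\|\hat y\|^2},\hat y)$ of $S^n$, so $\hat p$ is the coordinate expression of an isotropic Gaussian field on the upper hemisphere. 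Since the number of minima is a diffeomorphism-invariant count and the field is isotropic, the Kac--Rice density of index-$0$ critical points is the same at every point of $S^n$; hence it suffices to compute the joint law of $(\hat p,\nabla\hat p,\mathrm{Hess}\,\hat p)$ at the single point $\hat x=0$ (the pole) and multiply by the volume of the hemisphere.

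The second step is to extract the local Gaussian data. Expanding $g(\hat x,\hat y)=\langle\hat x,\hat y\rangle+\sqrt{1-\|\hat x\|^2}\sqrt{1-\|\hat y\|^2}$ around $\hat x=\hat y=0$ and differentiating $F\circ g$, one finds $\EE[\partial_i\hat p(0)\partial_j\hat p(0)]=F'(1)\delta_{ij}$, that $\mathrm{Var}(\hat p)$ is constant along the hemisphere so all covariances linear in $\nabla\hat p$ vanish (equivalently, a reflection about the pole flips $\nabla\hat p(0)$ but fixes $\hat p(0)$ and $\mathrm{Hess}\,\hat p(0)$), and that conditionally on $\hat p(0)=v$ the symmetric matrix $\mathrm{Hess}\,\hat p(0)$ is distributed as $\alpha\,M-\tfrac{F'(1)}{F(1)}\,v\,I_n$, where $\alpha=\alpha\bigl(F'(1),F''(1)\bigr)>0$ is explicit and $M$ is a $\mathrm{GOE}(n)$ matrix with the standard ratio between diagonal and off-diagonal variances (this ratio is exactly what the isotropic Hessian covariance forces). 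After standardizing the level $v$ to a variable $t$, all the scalar constants reorganize into $B=\frac{F''(1)-F'(1)}{F''(1)+F'(1)}$; the event $\{\mathrm{Hess}\,\hat p(0)\succ0\}$ becomes $\{\lambda_{\min}(M)>t\}$ (after the rescaling), on which $\bigl|\det\mathrm{Hess}\,\hat p(0)\bigr|=\det\mathrm{Hess}\,\hat p(0)$ is a constant times $\prod_{i=1}^{n}\bigl(\lambda_i(M)-t\bigr)$.

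Substituting this into the Kac--Rice integral, $\EE\,\mathcal{N}_m$ becomes an explicit constant — a product of powers of $1\pm B$ coming from the Gaussian normalizations of the value, the gradient and the Hessian — times
\begin{equation*}
\int_{-\infty}^{\infty}e^{-\frac{(n+1)Bt^2}{2}}\ \EE_{M\sim\mathrm{GOE}(n)}\!\left[\prod_{i=1}^{n}\bigl(\lambda_i(M)-t\bigr)\,\mathbf{1}\{\lambda_{\min}(M)>t\}\right]dt.
\end{equation*}
Using $M\overset{d}{=}-M$ together with the evenness of the Gaussian weight, this equals the same integral with $\{\lambda_{\min}(M)>t\}$ replaced by $\{\lambda_{\max}(M)<t\}$ and $\prod_i(\lambda_i(M)-t)$ by $\prod_i(t-\lambda_i(M))$. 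At this point the classical ``bordering'' identity of Random Matrix Theory applies: writing out the $\mathrm{GOE}$ eigenvalue density shows that adjoining a scalar Gaussian variable $t$ to a $\mathrm{GOE}(n)$ matrix produces a $\mathrm{GOE}(n+1)$ matrix, and the factor $\prod_i|t-\lambda_i(M)|\,\mathbf 1\{\lambda_{\max}(M)<t\}\,e^{-\frac{(n+1)Bt^2}{2}}$ reproduces — via the Vandermonde determinant in the $(n+1)$-point eigenvalue integral — the density $\frac{d}{dt}\mathcal{F}_{n+1}(t)$ of the largest eigenvalue of $\mathrm{GOE}(n+1)$, conditioned on the remaining $n$ eigenvalues lying below it. Matching the remaining normalizations then yields exactly \eqref{minima}.

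The main obstacle is not depth in any single step but the bookkeeping: one must track the Kac--Rice Gaussian prefactors, the precise $\mathrm{GOE}(n)$ variance normalization, and the constants in the bordering identity, and check that they combine exactly into the clean prefactor $(1+B)^{\frac{n+1}{2}}(1-B)^{-\frac n2}$ and the exact exponent $\frac{(n+1)B}{2}$. A related delicate point is orientation: positive-definiteness of the Hessian must be matched with the correct side of the spectrum (the smallest rather than the largest eigenvalue), which is precisely where the $M\overset{d}{=}-M$ symmetry and the evenness of the Gaussian weight are invoked.
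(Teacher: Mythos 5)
First, a point of comparison: the paper does not prove this statement at all --- it is quoted from the lecture notes \cite{Fyodorov} (``We recall the following Theorem from \cite{Fyodorov}''), so there is no internal proof to match against. Your outline (Kac--Rice on the hemisphere, isotropy reducing the computation to a single point, a GOE description of the Hessian, the symmetry $M\overset{d}{=}-M$ to pass from $\lambda_{\min}$ to $\lambda_{\max}$, and the bordering identity identifying the $t$-integral with the largest-eigenvalue density of $\mathrm{GOE}(n+1)$) is structurally the derivation of that cited source, and the observation that the covariance argument is $\langle x,y\rangle$ for points of $S^n$ is the right starting point.

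There is, however, one step that is wrong as stated: your description of the Hessian law. Computing the derivatives of $F(g)$ at the pole gives $\EE[\partial_i\hat p\,\partial_j\hat p]=F'(1)\delta_{ij}$, $\EE[\hat p(0)H_{ij}]=-F'(1)\delta_{ij}$ with $\mathrm{Var}\,\hat p(0)=F(1)$, and
$\EE[H_{ij}H_{kl}]=F''(1)\left(\delta_{ik}\delta_{jl}+\delta_{il}\delta_{jk}\right)+\left(F''(1)+F'(1)\right)\delta_{ij}\delta_{kl}$.
Consequently the \emph{unconditional} Hessian is $H\overset{d}{=}\sqrt{F''(1)}\,M+\sqrt{F''(1)+F'(1)}\,\xi I_n$ with $M$ a $\mathrm{GOE}(n)$ matrix and $\xi$ an independent standard Gaussian, and it is this scalar $\xi$ (suitably rescaled), not the field value, that plays the role of the bordered eigenvalue $t$. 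Your claim that, conditionally on $\hat p(0)=v$, the Hessian is $\alpha M-\frac{F'(1)}{F(1)}vI_n$ with $M$ pure GOE is false in general: conditioning on the value leaves a residual common fluctuation of the diagonal entries (pairwise covariance $F''(1)+F'(1)-F'(1)^2/F(1)$) which your ansatz discards, and it entangles the level with the trace part of $M$. A clean internal red flag is that your intermediate quantities involve $F(1)$, whereas $B$ and the final formula \eqref{minima} depend only on $F'(1)$ and $F''(1)$; the $F(1)$-dependence can only disappear if the dropped identity-direction fluctuation is kept, so the bookkeeping as you set it up would not close. The repair also simplifies the argument: counting minima needs only $(\nabla\hat p,\mathrm{Hess}\,\hat p)$, and since the gradient is independent of value and Hessian at a point, no conditioning on $\hat p(0)$ is required at all --- one writes $\rho_{\nabla}(0)=(2\pi F'(1))^{-n/2}$, expresses $\det H\,\mathbf{1}\{H\succ 0\}$ through $(M,\xi)$, applies the symmetry and the bordering identity exactly as you describe, and the mismatch between the $\xi$-Gaussian (scale governed by the ratio $F''(1)/(F''(1)+F'(1))$, i.e. by $B$) and the $\mathrm{GOE}(n+1)$ weight produces the factor $e^{-\frac{(n+1)Bt^2}{2}}$ and the prefactor $(1+B)^{\frac{n+1}{2}}(1-B)^{-\frac{n}{2}}$, noting $1+B=\frac{2F''(1)}{F''(1)+F'(1)}$ and $1-B=\frac{2F'(1)}{F''(1)+F'(1)}$.
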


\subsection{The case of invariant polynomials}
We recall from the previous section that the covariance structure of $f|_{S^n}$ is given by:
$$\EE\{f(x)f(y)\}=\sum_{d-\ell\in 2\mathbb{N}}{p_d(\ell)}^2 \frac{d(n,\ell)}{|S^{n}|}\tilde{C}_{\ell}^{\frac{n-1}{2}}\left(\langle x, y\rangle\right), \quad x,y\in S^n.$$
Thus, the covariance structure of $\hat{p}_{\pm}$ is given by:
$$\EE\{\hat{p}_{\pm}(\hat{x})\hat{p}_{\pm}(\hat{y})\}=\sum_{d-\ell\in 2\mathbb{N}}{p_d(\ell)}^2 \frac{d(n,\ell)}{|S^{n}|}\tilde{C}_{\ell}^{\frac{n-1}{2}}\left(\langle \hat x, \hat y\rangle+\sqrt{1-\|\hat x\|^2}\cdot \sqrt{1-\|\hat{y}\|^2}\right), \quad \hat{x}, \hat{y}\in D_n.$$
This covariance structure satisfies the hypothesis of Theorem \ref{thm:fyodorov}, with corresponding $F$ given by:
\begin{equation}\label{eq:covariance}F(t)=\sum_{d-\ell\in 2\mathbb{N}}{p_d(\ell)}^2 \frac{d(n,\ell)}{|S^{n}|}\tilde{C}_{\ell}^{\frac{n-1}{2}}(t).\end{equation}
In particular, we see that
\begin{equation}\label{f1}F'(1)=\sum_{d-\ell\in 2\mathbb{N}}{p_d(\ell)}^2 \frac{d(n,\ell)}{|S^{n}|}\frac{d}{dt}\tilde{C}_{\ell}^{\frac{n-1}{2}}(t)|_{t=1},\end{equation}
and
\begin{equation}\label{f2}F''(1)=\sum_{d-\ell\in 2\mathbb{N}}{p_d(\ell)}^2 \frac{d(n,\ell)}{|S^{n}|}\frac{d^2}{dt^2}\tilde{C}_{\ell}^{\frac{n-1}{2}}(t)|_{t=1}.\end{equation}
In order to finally compute $B$ appearing in the statement of Theorem \ref{thm:fyodorov} 
it is enough to compute the derivatives $\frac{d}{dt}\tilde{C}_{\ell}^{\frac{n-1}{2}}(1)$ and $\frac{d^2}{dt^2}\tilde{C}_{\ell}^{\frac{n-1}{2}}(1),$
which are provided in the next lemma.

\begin{lemma}\label{lemmaC}$$\frac{d}{dt}\tilde{C}_{\ell}^{\frac{n-1}{2}}(t)|_{t=1}=\frac{(n+\ell-1)\ell}{n}\quad \textrm{and}\quad \frac{d^2}{dt^2}\tilde{C}_{\ell}^{\frac{n-1}{2}}(t)|_{t=1}=\frac{(n+\ell)(n+\ell-1)\ell(\ell-1)}{n(n+2)}.$$
\end{lemma}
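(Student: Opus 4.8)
The plan is to derive both identities from the classical Gegenbauer differential equation, using only the normalization $\tilde{C}_{\ell}^{\frac{n-1}{2}}(1)=1$. Set $\alpha=\frac{n-1}{2}$ and $y(t)=\tilde{C}_{\ell}^{\alpha}(t)$. Since $y$ is a scalar multiple of the Gegenbauer polynomial $C_{\ell}^{(\alpha)}$, it solves the Gegenbauer ODE
\begin{equation*}
(1-t^{2})\,y''-(2\alpha+1)\,t\,y'+\ell(\ell+2\alpha)\,y=0 .
\end{equation*}
First I would evaluate this at $t=1$: the leading term vanishes, and using $y(1)=1$ one gets immediately
$$y'(1)=\frac{\ell(\ell+2\alpha)}{2\alpha+1}=\frac{\ell(\ell+n-1)}{n},$$
which is the first formula (here $2\alpha+1=n$).

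For the second derivative, I would differentiate the ODE once with respect to $t$, obtaining
\begin{equation*}
(1-t^{2})\,y'''-(2\alpha+3)\,t\,y''+\bigl(\ell(\ell+2\alpha)-(2\alpha+1)\bigr)\,y'=0 ,
\end{equation*}
and evaluate again at $t=1$, where the $y'''$ term drops out. Solving for $y''(1)$ gives
$$y''(1)=\frac{\ell(\ell+2\alpha)-(2\alpha+1)}{2\alpha+3}\,y'(1).$$
Substituting $2\alpha+1=n$, $2\alpha+3=n+2$, the value of $y'(1)$ just found, and the factorization $\ell(\ell+n-1)-n=(\ell-1)(\ell+n)$, this collapses to
$$y''(1)=\frac{(\ell-1)(\ell+n)}{n+2}\cdot\frac{\ell(\ell+n-1)}{n}=\frac{(n+\ell)(n+\ell-1)\ell(\ell-1)}{n(n+2)},$$
which is the second formula. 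As an independent check I would also note that the same two expressions follow from the standard closed forms, combining the differentiation identity $\frac{d}{dt}C_{\ell}^{(\alpha)}=2\alpha\,C_{\ell-1}^{(\alpha+1)}$ (applied once, then twice) with the special value $C_{\ell}^{(\alpha)}(1)=\binom{\ell+2\alpha-1}{\ell}$ and simplifying the resulting ratios of binomial coefficients.

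I do not expect any genuine obstacle in this lemma; it is a routine computation with orthogonal polynomials. The only point deserving a word of care is the degenerate case $\alpha=0$, i.e. $n=1$, where $C_{\ell}^{(0)}$ is not literally defined: there the normalized polynomial $\tilde{C}_{\ell}^{0}$ is the Chebyshev polynomial $T_{\ell}$, which still satisfies the displayed ODE with $\alpha=0$, so both identities remain valid and reduce to $T_{\ell}'(1)=\ell^{2}$ and $T_{\ell}''(1)=\ell^{2}(\ell^{2}-1)/3$, consistent with the stated formulas.
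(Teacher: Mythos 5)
Your proof is correct, and your main argument goes by a genuinely different route than the paper's. The paper works from the two classical closed-form facts $C_{\ell}^{m}(1)=\binom{\ell+2m-1}{\ell}$ and $\frac{d}{dt}C_{\ell}^{m}(t)=2m\,C_{\ell-1}^{m+1}(t)$, applies the differentiation identity once and twice, and simplifies the resulting ratios of binomial coefficients before substituting $m=\frac{n-1}{2}$ — exactly the computation you relegate to your ``independent check.'' Your primary derivation instead evaluates the Gegenbauer ODE $(1-t^{2})y''-(2\alpha+1)ty'+\ell(\ell+2\alpha)y=0$ and its first derivative at $t=1$, where the degenerate leading coefficient kills the highest-order term each time; this needs only the normalization $\tilde{C}_{\ell}^{\alpha}(1)=1$, avoids binomial bookkeeping and the explicit special value, and generalizes immediately to higher derivatives at $t=1$ by repeated differentiation of the ODE. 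A further small advantage of your route is that it handles the degenerate case $n=1$ (i.e.\ $\alpha=0$, Chebyshev) transparently, whereas the identity $\frac{d}{dt}C_{\ell}^{m}=2mC_{\ell-1}^{m+1}$ formally degenerates at $m=0$ and the paper does not comment on this; since in the paper the lemma is only invoked for the $B$-parameter computation with general $n$, this is a cosmetic rather than substantive difference. All intermediate identities in your argument check out, including the factorization $\ell(\ell+n-1)-n=(\ell-1)(\ell+n)$ and the Chebyshev values $T_{\ell}'(1)=\ell^{2}$, $T_{\ell}''(1)=\ell^{2}(\ell^{2}-1)/3$.
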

\begin{proof}
We recall the following formula relating the non-normalized Gegenbauer polynomials and their derivatives \cite{gege}:
\begin{equation}\label{eq:gege}
\quad C_{\ell}^m(1)={\binom{\ell+2m-1}{\ell}}\quad  \textrm{and}\quad \frac{d}{dt}C_{\ell}^{m}(t)=2m C_{\ell-1}^{m+1}(t).\end{equation}
Thus, in particular we have:
\begin{align*}
\frac{d}{dt}\tilde{C}_{\ell}^{m}(t)|_{t=1}&=2m {\binom{\ell+2m-1}{\ell}}^{-1}C_{\ell-1}^{m+1}(1) \\
&=2m {\binom{\ell+2m-1}{\ell}}^{-1} {\binom{\ell+2m}{\ell-1}}\\
&=\frac{\ell(\ell+2m)}{2m+1}.
\end{align*}
Similarly for the second derivative:
\begin{align*}\frac{d^2}{dt^2}\tilde{C}_{\ell}^{m}(t)|_{t=1}&=4m(m+1) {\binom{\ell+2m-1}{\ell}}^{-1}C_{\ell-2}^{m+2}(1)\\
&=4m(m+1) {\binom{\ell+2m-1}{\ell}}^{-1} {\binom{\ell+2m+1}{\ell-2}}\\
&=\frac{(\ell+2m)(\ell+2m+1)\ell(\ell-1)}{(2m+3)(2m+1)}.
\end{align*}
Evaluating these two quantities at $m=\frac{n-1}{2}$ gives the result.
\end{proof}
As a corollary we can explicitly write the quantities appearing in Theorem \ref{thm:fyodorov} for our random invariant polynomial. Setting:
\begin{small}$$f^{(1)}(n,\ell)=\frac{(n+\ell-1)\ell}{n}=\frac{\ell^2}{n}+O(\ell)\quad\textrm{and}\quad f^{(2)}(n,l)=\frac{(n+\ell)(n+\ell-1)\ell(\ell-1)}{n(n+2)}=\frac{\ell^4}{n(n+2)}+O(\ell^3)$$\end{small}
we obtain:
\begin{align}\label{eq:B}
B=& \frac{F''(1)-F'(1)}{F''(1)+F'(1)}=\frac{\sum {p_d(\ell)}^2 d(n,\ell) (f^{(2)}(n,\ell)-f^{(1)}(n,\ell))}{\sum {p_d(\ell)}^2  d(n,\ell)(f^{(2)}(n,\ell)+f^{(1)}(n,\ell))}\\
\label{eq:1+B}
1+B=& 2\frac{F''(1)}{F''(1)+F'(1)}=2\frac{\sum {p_d(\ell)}^2 d(n,\ell) f^{(2)}(n,\ell)}{\sum {p_d(\ell)}^2 d(n,\ell)(f^{(2)}(n,\ell)+f^{(1)}(n,\ell))}\\
\label{eq:1-B}
1-B=&2\frac{F'(1)}{F''(1)+F'(1)}=2\frac{\sum {p_d(\ell)}^2 d(n,\ell) f^{(1)}(n,\ell)}{\sum {p_d(\ell)}^2 d(n,\ell) (f^{(2)}(n,\ell)+f^{(1)}(n,\ell))}.
\end{align}
\subsection{Large degree asymptotics for coherent ensembles}
We discuss here the asymptotic behavior of \eqref{eq:B} and \eqref{eq:1-B}  for coherent ensembles.
\begin{prop}\label{B:asympt}
For a coherent family of ensembles we have:
$$\lim_{d\to \infty}B=1\quad \textrm{and}\quad (1-B)^{-1}\sim \frac{d^{2\lambda}}{ 2(n+2)}\frac{\int_{0}^{+\infty}x^{n+3}\psi(x)^2dx}{\int_{0}^{+\infty}x^{n+1}\psi(x)^2dx},$$
where recall that $\psi$ is defined by the rescaling limit (\ref{coherence}).
\end{prop}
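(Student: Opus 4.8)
The plan is to substitute the closed forms \eqref{eq:B} and \eqref{eq:1-B} for $B$ and $1-B$ into Lemma~\ref{lemma:rescaling}, after peeling off the leading power of $\ell$ from each factor appearing in the sums. Observe that $d(n,\ell)$ is a polynomial in $\ell$ of degree $n-1$ with leading coefficient $\frac{2}{(n-1)!}$ (see \eqref{eq:dimharmonic}), while $f^{(1)}(n,\ell)=\frac{\ell^{2}}{n}+O(\ell)$ and $f^{(2)}(n,\ell)=\frac{\ell^{4}}{n(n+2)}+O(\ell^{3})$ are already isolated in the text. Multiplying polynomials and extracting the top monomial, the $\ell$-th summand of $F''(1)$ in \eqref{f2} is $\frac{p_{d}(\ell)^{2}}{|S^{n}|}\bigl(\frac{2}{(n-1)!\,n(n+2)}\ell^{n+3}+O(\ell^{n+2})\bigr)$, and that of $F'(1)$ in \eqref{f1} is $\frac{p_{d}(\ell)^{2}}{|S^{n}|}\bigl(\frac{2}{(n-1)!\,n}\ell^{n+1}+O(\ell^{n})\bigr)$; all terms are nonnegative, so there is no cancellation and the contributions of the few small values of $\ell$ (where $f^{(1)}$ or $f^{(2)}$ vanish) are harmless.

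Next I would invoke Lemma~\ref{lemma:rescaling} with $b=2$ and $a=n+3$, resp.\ $a=n+1$, to get
\begin{equation*}
F''(1)\sim\frac{d^{\lambda(n+2)}\,\mu_{n+3}(\psi^{2})}{|S^{n}|\,(n-1)!\,n(n+2)},\qquad
F'(1)\sim\frac{d^{\lambda n}\,\mu_{n+1}(\psi^{2})}{|S^{n}|\,(n-1)!\,n},
\end{equation*}
where $\mu_{n+1}(\psi^{2})$ and $\mu_{n+3}(\psi^{2})$ are finite by the subgaussian tail in \eqref{coherence} and strictly positive because $\psi\ge 0$ is not identically zero. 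The error terms generated here, $O(d^{\lambda(n+1)})$ and $O(d^{\lambda(n-1)})$, are of strictly smaller order precisely because $\lambda>0$; this is the only place coherence with $\lambda>0$ enters. Dividing, the factors $|S^{n}|$ and $(n-1)!$ cancel and
\begin{equation*}
\frac{F'(1)}{F''(1)}\sim\frac{(n+2)\,\mu_{n+1}(\psi^{2})}{d^{2\lambda}\,\mu_{n+3}(\psi^{2})}\xrightarrow[d\to\infty]{}0 .
\end{equation*}

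Finally I would feed this limit into the algebraic identities $B=\bigl(1-\tfrac{F'(1)}{F''(1)}\bigr)\big/\bigl(1+\tfrac{F'(1)}{F''(1)}\bigr)$ and $1-B=\frac{2F'(1)/F''(1)}{1+F'(1)/F''(1)}$ that rewrite \eqref{eq:B} and \eqref{eq:1-B}: the first gives $\lim_{d\to\infty}B=1$, and the second gives $1-B\sim 2F'(1)/F''(1)\sim \frac{2(n+2)\,\mu_{n+1}(\psi^{2})}{d^{2\lambda}\,\mu_{n+3}(\psi^{2})}$, hence $(1-B)^{-1}\sim\frac{d^{2\lambda}}{2(n+2)}\cdot\frac{\mu_{n+3}(\psi^{2})}{\mu_{n+1}(\psi^{2})}$, which is exactly the claimed formula since $\mu_{k}(\psi^{2})=\int_{0}^{\infty}x^{k}\psi(x)^{2}\,dx$. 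The only real work in the argument is the routine bookkeeping of lower-order error terms, and it is painless here: each such term is again a sum of the form $\sum_{d-\ell\in 2\mathbb{N}}\ell^{a'}p_{d}(\ell)^{2}$ with $a'$ strictly smaller than the relevant exponent, so Lemma~\ref{lemma:rescaling} automatically places it at a strictly lower power of $d$. I do not anticipate any genuine obstacle.
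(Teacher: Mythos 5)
Your proposal is correct and follows essentially the same route as the paper: substitute the leading-order asymptotics of $d(n,\ell)$, $f^{(1)}(n,\ell)$, $f^{(2)}(n,\ell)$ into \eqref{eq:B} and \eqref{eq:1-B}, and apply Lemma \ref{lemma:rescaling} with $b=2$ and $a=n+3$, $a=n+1$ to identify the leading powers of $d^{\lambda}$ and the moments $\mu_{n+1}(\psi^2)$, $\mu_{n+3}(\psi^2)$. Your passage through the separate asymptotics of $F'(1)$, $F''(1)$ and the ratio $F'(1)/F''(1)$ is only a cosmetic reorganization of the paper's direct manipulation of the fractions.
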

\begin{proof}
Let us start by recalling equation \eqref{eq:dimharmonic}:
$$d(n,\ell)=\frac{(n+2\ell-1)(n+\ell-2)!}{\ell! (n-1)!}=\frac{2}{(n-1)!}\ell^{n-1}+O(\ell^{n-2}).$$
Substituting this into \eqref{eq:B} we obtain:
 $$B=\frac{\frac{2}{n!}\sum {p_d(\ell)}^2\ell^{n+3}+\sum {p_d(\ell)}^2 O(\ell^{n+2})}{\frac{2}{n!}\sum {p_d(\ell)}^2\ell^{n+3}+\sum p_d(\ell)^2 O(\ell^{n+2})},$$
and in particular, using Lemma \ref{lemma:rescaling}, we can write:
$$\lim_{d\to \infty}B=\lim_{d\to \infty}\frac{\frac{2}{n!}d^{\lambda n}+O(d^{\lambda(n-1)})}{\frac{2}{n!}d^{\lambda n}+O(d^{\lambda(n-1)})}=1.$$
For $(1-B)^{-1}$ we argue similarly and substituting \eqref{eq:dimharmonic} into the reciprocal of \eqref{eq:1-B} we obtain:
\begin{align*}(1-B)^{-1}&=\frac{1}{2}\frac{\frac{2}{(n-1)!}\sum {p_d(\ell)}^2 \frac{\ell^{n+3}}{n(n+2)}+\sum {p_d(\ell)}^2 O(\ell^{n+2})}{\frac{2}{(n-1)!}\sum {p_d(\ell)}^2 \frac{\ell^{n+1}}{n} }\\
&\sim \frac{1}{2(n+2)}\frac{d^{\lambda n}\int_{0}^{+\infty}x^{n+3}\psi(x)^2dx }{d^{\lambda( n-2)}\int_{0}^{+\infty}x^{n+1}\psi(x)^2dx+O(d^{\lambda (n-3)})}\\
&=\Theta(d^{2\lambda}),
\end{align*}
where in the last line we have used the assumption that $\psi$ is nonzero, hence both the integrals $\int_0^{\infty}x^{n+3}\psi(x)^2dx$ and $\int_0^{\infty}x^{n+1}\psi(x)^2dx$ are different from zero.
\end{proof}
As a corollary we derive the following theorem.

\begin{thm}[The upper bound]\label{thm:upperbound}
For a coherent family of ensembles:
$$\EE b_{0}(X) = O(d^{\lambda n}).$$
\end{thm}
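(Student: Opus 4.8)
The plan is to combine Proposition~\ref{prop:criticalbound}, Theorem~\ref{thm:fyodorov} and the asymptotics of Proposition~\ref{B:asympt} to bound $\EE b_0(X)$ by a constant times $\EE\mathcal{N}_m$, and then to estimate the integral formula \eqref{minima} for $\EE\mathcal{N}_m$. Concretely, since $b_0(\overline X)=2b_0(X)-\tfrac{1+(-1)^{d+1}}{2}\ge 2 b_0(X)-1$ by \eqref{cover}, Proposition~\ref{prop:criticalbound} gives $\EE b_0(X) \le 2\EE\#\{\textrm{minima of }\hat p_+\}+\tfrac12 = 2\EE\mathcal{N}_m + \tfrac12$. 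Thus it suffices to show $\EE\mathcal{N}_m = O(d^{\lambda n})$. We already verified that $\hat p_+$ has covariance of the form required by Theorem~\ref{thm:fyodorov} with $F$ as in \eqref{eq:covariance}, so
\begin{equation*}
\EE\mathcal{N}_m = (1+B)^{\frac{n+1}{2}}(1-B)^{-\frac n2}\int_{-\infty}^{+\infty}e^{-\frac{(n+1)Bt^2}{2}}\frac{d}{dt}\mathcal{F}_{n+1}(t)\,dt,
\end{equation*}
with $B=B(d,n)$ as in \eqref{eq:B}.

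Next I would control each factor as $d\to\infty$, with $n$ fixed. By Proposition~\ref{B:asympt}, $B\to 1$, so $(1+B)^{\frac{n+1}{2}}\to 2^{\frac{n+1}{2}}$ is bounded, while $(1-B)^{-\frac n2} = \Theta(d^{\lambda n})$ by the same proposition (this is exactly where the power $d^{\lambda n}$ enters, and where the coherence hypothesis — via Lemma~\ref{lemma:rescaling} and the nonvanishing of the moments $\mu_{n+1}(\psi^2),\mu_{n+3}(\psi^2)$ — is used). The remaining task is to show the integral $\int_{-\infty}^{+\infty}e^{-\frac{(n+1)Bt^2}{2}}\frac{d}{dt}\mathcal{F}_{n+1}(t)\,dt$ stays bounded (indeed, is $O(1)$ in $d$) as $B\to 1^-$. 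Here $B<1$ so the Gaussian weight is genuinely decaying; one integrates by parts to write the integral as $(n+1)B\int_{-\infty}^{+\infty} t\, e^{-\frac{(n+1)Bt^2}{2}}\mathcal{F}_{n+1}(t)\,dt$ (the boundary terms vanish since $\mathcal F_{n+1}$ is a density with rapidly decaying tails), and then bound it crudely by replacing $\mathcal F_{n+1}(t)\le 1$ and $B\ge \tfrac12$ for $d$ large, so that the integral is at most $(n+1)\int_{-\infty}^{+\infty}|t| e^{-\frac{(n+1)t^2}{4}}dt = 4$, a constant depending only on $n$. (Alternatively, observe $e^{-\frac{(n+1)Bt^2}2}\le 1$ and the integral of $\frac{d}{dt}\mathcal F_{n+1}$ over any interval is bounded by the total variation of $\mathcal F_{n+1}$, which is $\le 2$ since it is monotone on each half-line after splitting at the mode; either way it is $O(1)$.)

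Putting the three estimates together yields $\EE\mathcal{N}_m = (1+B)^{\frac{n+1}{2}}(1-B)^{-\frac n2}\cdot O(1) = O(1)\cdot \Theta(d^{\lambda n})\cdot O(1) = O(d^{\lambda n})$, and hence $\EE b_0(X) \le 2\EE\mathcal N_m + \tfrac12 = O(d^{\lambda n})$, which is the claim. The one point requiring genuine care — the ``main obstacle'' — is the uniform (in $d$, as $B\to 1^-$) bound on the eigenvalue-density integral; one must be sure the blow-up is entirely captured by the explicit prefactor $(1-B)^{-n/2}$ and that nothing further is hidden in the integral. Keeping the Gaussian factor bounded by $1$ and using that $\mathcal F_{n+1}$ has bounded variation (being a probability density that is unimodal, or at least of total variation $\le 2$) settles this cleanly and avoids any delicate large-deviations analysis of the GOE top eigenvalue, which is not needed for an upper bound of the stated order.
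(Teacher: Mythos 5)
Your proof is correct and follows essentially the same route as the paper: reduce to the double cover via \eqref{cover}, bound $b_0$ by counting minima via Proposition \ref{prop:criticalbound}, apply Theorem \ref{thm:fyodorov}, and extract the $d^{\lambda n}$ growth from $(1-B)^{-n/2}$ using Proposition \ref{B:asympt}. The only difference is in the treatment of the eigenvalue integral, where the paper uses dominated convergence (the integral tends to $I_{n+1}$) while you bound it crudely; note that your pointwise bound $\mathcal{F}_{n+1}(t)\le 1$ is not justified for a probability density, but since $n$ is fixed you may simply replace it by the finite constant $\sup_t \mathcal{F}_{n+1}(t)$, which leaves your argument intact.
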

\begin{proof}
We prove the claim for $\overline{X}$ (the ``double cover'' of $X$), and the result for $b_0(X)$ follows from \eqref{cover}.

From Proposition \ref{prop:criticalbound} we know that $\EE b_0(\overline{X})\leq 4\EE \mathcal{N}_m;$ on the other hand combining Proposition \ref{B:asympt} into equation \eqref{minima}, we obtain:
$$\EE \mathcal{N}_m =(1+B)^{\frac{n+1}{2}}(1-B)^{-\frac{n}{2}}\int_{-\infty}^{+\infty}e^{-\frac{(n+1)Bt^2}{2}}\frac{d}{dt}\mathcal{F}_{n+1}(t)dt = \Theta(d^{\lambda n}),$$
since as $d$ goes to infinity $B\to 1$ and consequently by the dominated convergence theorem we have:
$$\int_{-\infty}^{+\infty}e^{-\frac{(n+1)Bt^2}{2}}\frac{d}{dt}\mathcal{F}_{n+1}(t)dt\to \int_{-\infty}^{+\infty}e^{-\frac{(n+1)t^2}{2}}\frac{d}{dt}\mathcal{F}_{n+1}(t)dt=I_{n+1},$$
 which is independent of $d$. 
 \end{proof}

\section{Lower bounds using the barrier method}
In this section, we will prove the following theorem; we generalize the construction given in \cite{LerarioLundberg} for the real Fubini-Study ensemble.

\begin{thm}[The lower bound]\label{thm:lowerbound}
Given a coherent family of ensembles there exists a constant $c>0$ such that:
$$\EE b_{0}(X)\geq c d^{\lambda n}.$$
\end{thm}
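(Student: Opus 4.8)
The plan is to adapt the barrier method of Nazarov--Sodin and Lerario--Lundberg to the rescaled setting. The key idea is that a lower bound on $\EE b_0(X)$ follows from producing, with uniformly positive probability, a definite sign behavior of $f$ on each of $\Theta(d^{\lambda n})$ disjoint small spherical caps, and then summing the resulting expectations. Concretely, I would tile the hemisphere (or a hemisphere of $S^n$, passing to $\overline X$ and using \eqref{cover}) by $\Theta(d^{\lambda n})$ caps $B_x$ of radius comparable to $d^{-\lambda}$ centered at points $x$. On each cap I want to show that with probability at least some $p_0>0$ independent of $x$ and $d$, the restriction $f|_{B_x}$ has a zero set that contributes at least one extra connected component to $X$; the standard way to do this is to exhibit a ``barrier'': a deterministic function $w_x$ supported near $B_x$ that is positive at the center and negative on an annulus surrounding it, and then argue that the event $\{\,\|f - c w_x\|_{C^1(B_x)} \text{ small}\,\}$ has probability bounded below, on which $f$ inherits the sign pattern of $w_x$ and hence $X$ has a component inside $B_x$. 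Summing over the $\Theta(d^{\lambda n})$ caps and using linearity of expectation (components in different caps are distinct) gives $\EE b_0(\overline X) \geq p_0 \cdot \Theta(d^{\lambda n})$, hence the theorem.

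The first technical step is to rescale. Near a fixed point $x_0 \in S^n$, introduce coordinates $u \in T_{x_0}S^n$ via the exponential map and set $u = d^{-\lambda} v$; I would study the rescaled field $g_d(v) = f(\exp_{x_0}(d^{-\lambda} v))$ (suitably normalized by its pointwise variance) on a fixed ball $B_R \subset \RR^n$. Using the covariance formula \eqref{eq:covariance} together with Lemma \ref{lemmaC} and Lemma \ref{lemma:rescaling}, I would show that the covariance kernel of $g_d$ converges, uniformly on compact subsets of $B_R \times B_R$ together with derivatives up to order two, to a nondegenerate limit kernel $k_\psi$ depending only on the moments $\mu_{n+1}(\psi^2), \mu_{n+3}(\psi^2), \ldots$ of the rescaling limit $\psi$ (this is precisely the kind of computation already carried out in Proposition \ref{B:asympt}, just localized). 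Nondegeneracy of the limit field is where $\psi \neq 0$ enters. This gives a limiting Gaussian field $g_\infty$ on $\RR^n$, translation-invariant and with real-analytic (in fact entire, since everything is polynomial) covariance.

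Next I would fix the barrier for the \emph{limit} field. Because $g_\infty$ is a nondegenerate smooth Gaussian field on $\RR^n$ whose support (in the sense of the Cameron--Martin/reproducing-kernel space) contains a rich class of functions, there is a positive probability that $g_\infty$ restricted to $B_R$ lies $C^1$-close to any prescribed smooth target $w$; choosing $w$ to be $+1$ near the origin and $-1$ on a sphere $\{|v| = R/2\}$, the event $\{\|g_\infty - w\|_{C^1(B_R)} < \e\}$ has probability $p_0 > 0$, and on this event the zero set of $g_\infty$ has a component inside $B_{R/2}$. Then, by the $C^1$-convergence of the covariance kernels, the corresponding events for $g_d$ have probability $\geq p_0/2$ for all $d$ large; here I would invoke continuity of Gaussian measures with respect to convergence of covariance (e.g. via a coupling or via the fact that the finite-dimensional distributions on a net of points in $B_R$ converge, plus equicontinuity from the kernel bounds). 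Finally I transport back: a $C^1$-close-to-$w$ event for $g_d$ on $B_R$ translates into a nodal component of $X$ inside the physical cap $B_{x_0}$ of radius $\sim R d^{-\lambda}$, and by invariance of the ensemble the probability $p_0/2$ is the same for every center $x_0$. Packing $\Theta(d^{\lambda n})$ such disjoint caps into a hemisphere and summing finishes the proof.

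The main obstacle is the local convergence of the (normalized) field $g_d$ together with its first derivatives, and in particular establishing nondegeneracy of the limit field uniformly in $d$ so that the barrier event has probability bounded \emph{away from zero uniformly in $d$}. The Gegenbauer covariance \eqref{eq:covariance} is a sum over many harmonics with weights $p_d(\ell)^2 d(n,\ell)$, and one must check that after the $\ell = d^\lambda x$ rescaling the contributions organize into a clean integral against $\psi(x)^2$ for the kernel \emph{and} all its derivatives up to order two, with error terms that vanish as $d \to \infty$ — this is exactly the $C^2$-analogue of Lemma \ref{lemma:rescaling}, and the subgaussian domination in \eqref{coherence} is what makes the dominated-convergence argument go through. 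A secondary subtlety is bookkeeping: ensuring the caps are genuinely disjoint so that distinct caps produce distinct components of $X$, and handling the correction term in \eqref{cover} for odd $d$, but these are routine.
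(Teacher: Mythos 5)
Your overall architecture (tile a hemisphere of $S^n$ by $\Theta(d^{\lambda n})$ disjoint caps of radius $\sim d^{-\lambda}$, get a uniform-in-$d$ positive probability of a sign pattern in each cap, sum expectations, handle $\overline X$ via \eqref{cover}) matches the paper, but your mechanism for the key probabilistic step is different and, as written, has a genuine gap. You pass to the scaling limit $g_\infty$ and assert that a nondegenerate smooth Gaussian field is, with positive probability, $C^1$-close to \emph{any} prescribed smooth target $w$. That is false in general: the topological support of a Gaussian measure is the closure of its Cameron--Martin (reproducing-kernel) space, and for the limiting fields arising here that space can be very thin. The coherence hypothesis only says $\psi$ is nonzero, integrable and subgaussian; if, say, $\operatorname{supp}\psi\subset[1,2]$, the limit field is band-limited (its spectral measure lives on an annulus), every function in the support of its law solves strong rigidity constraints, and your target ``$+1$ near the origin, $-1$ on $\{|v|=R/2\}$'' is not approximable. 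The whole content of the lower bound is precisely the construction of a barrier \emph{inside} the admissible class, and your sketch assumes it away. (A secondary, repairable issue: transferring the positive probability from $g_\infty$ back to $g_d$ needs weak convergence in $C(B_R)$ or $C^1(B_R)$ — convergence of covariances and their derivatives plus tightness, then a portmanteau/liminf argument for the open event — which is essentially the Nazarov--Sodin coupling machinery; it can be done but is the heavy part of that route, not a one-line continuity statement.)

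The paper avoids the scaling limit altogether and works at finite $d$, which is where it differs most from your plan. It constructs an explicit unit-norm barrier polynomial $B_x\in W_{n,d}$ as a normalized sum of zonal harmonics $\sum_{\ell\in[Ad^\lambda,Bd^\lambda]}p_d(\ell)Y_\ell$, choosing the cap radius $r=\frac{2y_n}{2Bd^\lambda+n-1}$ via the first minimum $y_n$ of the Bessel function $J_{\frac{n-2}{2}}$ and using Szeg\"o's Mehler--Heine asymptotics for Jacobi polynomials to show $B_x(x)\gtrsim d^{\lambda(n-2)/2}$ and $B_x\lesssim -d^{\lambda(n-2)/2}$ on $\partial D(x,r)$ (Lemma \ref{lemma1}); note that only the assumption $\int_A^B\psi>0$ is used, exactly because one cannot prescribe an arbitrary profile. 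It then decomposes $f=\xi_0 B_x+f^\perp$, introduces an independent copy $\tilde\xi_0$ and $f_\pm=\pm\tilde\xi_0 B_x+f^\perp$ so that $f=\xi_0B_x+\tfrac12(f_++f_-)$ with the ``barrier large'' event independent of the ``perturbation small'' event, and controls the perturbation by proving $\EE\max_{\partial D(x,r)}|f|\leq c_2 d^{\lambda(n-2)/2}$ (Lemma \ref{lemma2}, via $L^\infty$--$L^2$ estimates for spherical harmonics and Lemma \ref{lemma:rescaling}) followed by Markov's inequality. If you want to keep your scaling-limit route, the fix is to build the limiting barrier inside the reproducing-kernel space of $k_\psi$ (a radial superposition of the Bessel kernels weighted by $\psi^2$, i.e.\ the $d\to\infty$ shadow of the paper's $B_x$) rather than invoking approximation of arbitrary targets, and then carry out the weak-convergence transfer carefully; but at that point you have essentially reproduced the paper's construction with an extra limiting layer on top.
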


As above, we prove the claim for $\overline{X}$ (the ``double cover'' of $X$) and the result for $b_0(X)$ follows from \eqref{cover}.

The proof of Theorem \ref{thm:lowerbound}
can be reduced to a local consideration.
 For $x$ a point on the sphere $S^n$ consider the following event:
\begin{equation}\label{omega}
\Omega(x, r)=\{f(x)>0\quad \textrm{and} \quad f |_{\partial D(x, r)}<0\}.\end{equation}
If $\Omega(x, r)$ occurs, then $\{f=0\}$ has a component inside $D(x, r)$ (and one of these components \emph{loops} around $x$).
We will prove that for $r \sim \rho_n d^{-\lambda}$, with $\rho_n$ a constant specified in (\ref{eq:defr}), there exists a constant $a_1>0$ independent of $x$ and $d$ such that
\begin{equation}\label{eq:P}
\PP\{\Omega(x, r)\}\geq a_1.
\end{equation}
Since we can cover the sphere $S^n$ with at least $k  d^{\lambda n}$ disjoint  such disks, for some $k>0$,
this immediately gives us the statement (each one of these disks contributing at least $a_1$ to the expectation in the statement).

We will need two Lemmas in order to prove (\ref{eq:P}).
The first provides the existence of the so called \emph{barrier} function; 
the second gives a bound on the expected maximum of $|f|$ restricted to $\partial D(x,r)$.

\begin{lemma}\label{lemma1}
Given a coherent family of ensembles, there exists a constant $c_1>0$ such that for every $d>0$ and every $x$ in $S^n$ there exists a homogeneous polynomial $B_x$ of degree $d$ and norm one (with respect to the scalar product in $W_{n,d}$ inducing the probability distribution) satisfying:
$$B_x(x)\geq c_1 d^{\frac{\lambda(n-2)}{2}} \quad \textrm{and}\quad B_x|_{\partial D(x, r)}\leq -c_1 d^{\frac{\lambda(n-2)}{2}}.$$
\end{lemma}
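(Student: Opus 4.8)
The plan is to produce $B_x$ as a rescaled zonal \emph{wave packet} centered at $x$, generalizing the construction of \cite{LerarioLundberg}. Since the scalar product inducing the ensemble is $O(n+1)$-invariant, it suffices to treat $x=e_0$ (the north pole): for general $x$ one applies a rotation taking $e_0$ to $x$, which preserves the norm and carries the values at $e_0$ and on $\partial D(e_0,r)$ to the corresponding values at $x$ and on $\partial D(x,r)$. I look for $B_x$ among the zonal polynomials
$$B_x(y)=\frac{1}{Z}\sum_{d-\ell\in 2\mathbb{N}}w_d(\ell)\,\|y\|^{d}\,\tilde{C}_{\ell}^{\frac{n-1}{2}}\!\left(\frac{\langle e_0,y\rangle}{\|y\|}\right),$$
where $w_d(\ell)=p_d(\ell)^2\,h(\ell d^{-\lambda})\geq 0$ for a fixed nonnegative continuous bump $h$ supported on a short interval bounded away from $0$ (to be chosen), and $Z>0$ makes $\|B_x\|_{W_{n,d}}=1$. (Each summand is a genuine homogeneous polynomial of degree $d$ since $d-\ell\in 2\mathbb{N}$.) Using that the $\hat Y_\ell^j$ have squared norm $p_d(\ell)^{-2}$ and the addition formula $\sum_j Y_\ell^j(e_0)^2=d(n,\ell)/|S^n|$, one gets $Z^2=|S^n|\sum_{d-\ell\in 2\mathbb{N}}p_d(\ell)^2 h(\ell d^{-\lambda})^2/d(n,\ell)$, while on the sphere $B_x|_{S^n}(y)=Z^{-1}\sum_\ell w_d(\ell)\,\tilde{C}_{\ell}^{\frac{n-1}{2}}(\cos\theta(e_0,y))$; in particular $B_x(e_0)=Z^{-1}\sum_\ell w_d(\ell)$.

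Next I would extract the $d\to\infty$ asymptotics. Using $d(n,\ell)=\Theta(\ell^{n-1})$ and the Riemann-sum / change-of-variables argument behind Lemma \ref{lemma:rescaling} (with $p_d(d^\lambda y)\sim\psi(y)d^{-\lambda}$ and dominated convergence), one obtains $Z=\Theta(d^{-\lambda n/2})$ and $B_x(e_0)=\Theta(d^{-\lambda})/Z=\Theta(d^{\lambda(n-2)/2})$, with a \emph{positive} implied constant provided $\int_0^\infty\psi^2 h\,dy>0$. For a point $y$ with $\theta(e_0,y)=\rho\,d^{-\lambda}$, the extra factor $\tilde{C}_{\ell}^{\frac{n-1}{2}}(\cos(\rho d^{-\lambda}))$ has, at $\ell=y\,d^\lambda$, the Mehler--Heine (Hilb-type) limit $\Gamma(\tfrac n2)(y\rho/2)^{-\frac{n-2}{2}}J_{\frac{n-2}{2}}(y\rho)$, uniformly for $y$ in the (compact) support of $h$ and for $\rho$ in a compact set. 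Feeding this into the same computation yields
$$\lim_{d\to\infty}d^{-\lambda(n-2)/2}\,B_x\big|_{\partial D(e_0,\,\rho d^{-\lambda})}=\kappa_n\int_0^\infty\psi(y)^2 h(y)\,\Gamma\!\left(\tfrac n2\right)\!\left(\tfrac{y\rho}{2}\right)^{-\frac{n-2}{2}}\!J_{\frac{n-2}{2}}(y\rho)\,dy=:\kappa_n\,\Psi(\rho)$$
for an explicit $\kappa_n>0$; note $\Psi(0)=\int_0^\infty\psi^2 h\,dy>0$ (which reproduces the value at $e_0$), since $(z/2)^{-\frac{n-2}{2}}J_{\frac{n-2}{2}}(z)\to 1/\Gamma(\tfrac n2)$ as $z\to 0$.

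It remains to choose $h$ and the rescaled radius $\rho_n$ so that $\Psi(\rho_n)<0$; this is where the oscillation of the Bessel function is used. Since $\psi\geq 0$ is integrable and not identically zero, I fix a Lebesgue density point $y_1>0$ of $\{\psi>0\}$ and set $\rho_n:=(j_{\frac{n-2}{2},1}+j_{\frac{n-2}{2},2})/(2y_1)$, where $j_{\nu,1}<j_{\nu,2}$ are the first two positive zeros of $J_\nu$; then for a short enough interval $I\ni y_1$ one has $y\rho_n\in(j_{\frac{n-2}{2},1},j_{\frac{n-2}{2},2})$, hence $J_{\frac{n-2}{2}}(y\rho_n)<0$, for every $y\in I$, and moreover $\int_I\psi^2\,dy>0$. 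Taking $h$ to be a nonnegative bump supported in such an $I$ makes the integrand defining $\Psi(\rho_n)$ everywhere $\leq 0$ and strictly negative on the positive-measure set $\{\psi>0\}\cap I$, so $\Psi(\rho_n)<0$ while $\Psi(0)>0$. Setting $r=\rho_n d^{-\lambda}$ and $c_1:=\tfrac12\kappa_n\min\{\Psi(0),|\Psi(\rho_n)|\}>0$, the two limits above give $B_x(e_0)\geq c_1 d^{\lambda(n-2)/2}$ and $B_x|_{\partial D(e_0,r)}\leq -c_1 d^{\lambda(n-2)/2}$ for all large $d$; the finitely many small values of $d$ are handled by an ad hoc choice of $B_x$ (or absorbed into the constant of Theorem \ref{thm:lowerbound}), and uniformity in $x$ is automatic by invariance. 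Note $h$ and $\rho_n$ depend only on $n$ and the coherent family, as required.

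The main obstacle is the analytic input of the middle two paragraphs: proving the Gegenbauer-to-Bessel convergence with enough uniformity (in $\rho$ and over the frequency window) to commute the limit with the sum/integral, and then genuinely exploiting the oscillation of $J_{\frac{n-2}{2}}$ — by concentrating the profile $h$ near a single frequency $y_1$ — to force a \emph{positive} combination of zonal harmonics to take negative values at the prescribed radius. The rest (the normalization bookkeeping and the two-sided estimate on $Z$ and $B_x(e_0)$) is a routine application of Lemma \ref{lemma:rescaling}.
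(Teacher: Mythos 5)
Your construction is correct and is essentially the paper's own argument: a norm-one zonal wave packet localized at frequencies $\Theta(d^{\lambda})$, whose normalization and value at the center are controlled by the rescaling lemma (Lemma \ref{lemma:rescaling}), and whose negativity at radius $r=\Theta(d^{-\lambda})$ is forced by the Mehler--Heine/Hilb asymptotics placing the Bessel factor $J_{\frac{n-2}{2}}$ in a negative range uniformly over a narrow frequency window. The only differences are minor: you weight by $p_d(\ell)^2h(\ell d^{-\lambda})$ with a smooth bump instead of $p_d(\ell)$ on a sharp window $[Ad^{\lambda},Bd^{\lambda}]$, and you aim between the first two zeros of $J_{\frac{n-2}{2}}$ at a Lebesgue density point of $\{\psi>0\}$ rather than just below its first minimum --- a choice that in fact treats more carefully the paper's ``increase $A$ if necessary'' step, which must be done while preserving $\int_A^B\psi>0$.
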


\begin{lemma}\label{lemma2} 
For a coherent family of ensembles, there exists a constant $c_2>0$ such that:
$$\mathbb{E} \max_{\partial D(x, r)}|f|\leq c_2 d^{\lambda(n-2)/2}.$$
\end{lemma}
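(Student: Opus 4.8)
# Proof Proposal for Lemma \ref{lemma2}

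\textbf{Proof proposal.} The plan is to exploit the fact that $\partial D(x,r)$ has diameter comparable to $d^{-\lambda}$, which is exactly the natural length scale of the field $f$: after rescaling this region to unit size, $f$ becomes a Gaussian field all of whose Sobolev norms are of the \emph{single} order $d^{\lambda(n-2)/2}$, and the supremum is then controlled by the Sobolev embedding $H^s\hookrightarrow C^0$ on a \emph{fixed} compact $(n-1)$-manifold. Throughout, ``the constants'' will depend only on $n$ and the rescaling data $(\lambda,\psi,c_1,c_2)$, never on $d$ or on the point $x$.

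First I would record the variances of the intrinsic derivatives of $f$ along the sphere. The covariance of $f|_{S^n}$ is $F(\langle x,y\rangle)$ with $F$ as in \eqref{eq:covariance}, so for each $k$ the pointwise variance of a $k$-th order derivative of $f$ equals, up to an $n$-dependent constant, $F^{(k)}(1)=\sum_{d-\ell\in 2\mathbb{N}}p_d(\ell)^2\frac{d(n,\ell)}{|S^n|}\frac{d^k}{dt^k}\tilde C_\ell^{\frac{n-1}{2}}(1)$. Iterating the Gegenbauer derivative identity in \eqref{eq:gege} exactly as in the proof of Lemma \ref{lemmaC} gives $\frac{d^k}{dt^k}\tilde C_\ell^{\frac{n-1}{2}}(1)=\Theta(\ell^{2k})$; combined with $d(n,\ell)=\Theta(\ell^{n-1})$ from \eqref{eq:dimharmonic} and Lemma \ref{lemma:rescaling} (with $a=n-1+2k$, $b=2$) this yields $F^{(k)}(1)=\Theta(d^{\lambda(n-2+2k)})$, and in particular the upper bound $F^{(k)}(1)\leq C_{n,k}\,d^{\lambda(n-2+2k)}$. (For $k=0,1,2$ these are the quantities already appearing in \eqref{f1}--\eqref{f2}.)

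Now fix $x$, set $\Sigma=\partial D(x,r)$, an $(n-1)$-sphere of radius $\sim\rho_n d^{-\lambda}$, and consider the rescaled field $\tilde{f}(u)=f(\exp_x(d^{-\lambda}u))$ for $u$ in the \emph{fixed} sphere $S=\{|u|=\rho_n\}\subset T_xS^n\cong\RR^n$; by isotropy the law of $\tilde{f}$ on $S$ does not depend on $x$. Since $\rho_n d^{-\lambda}\to 0$, the map $u\mapsto\exp_x(d^{-\lambda}u)$ is uniformly bi-Lipschitz with uniformly bounded derivatives of all orders on a coordinate ball containing $S$, so for $|\alpha|=k$ the derivative $\partial^\alpha_u\tilde{f}$ is a combination (with uniformly bounded coefficients) of $d^{-\lambda j}$ times $j$-th order intrinsic derivatives of $f$, $j\leq k$; hence $\mathbb{E}|\partial^\alpha_u\tilde{f}(u)|^2\leq C_{n,k}\sum_{j\leq k}d^{-2\lambda j}F^{(j)}(1)=C_{n,k}\sum_{j\leq k}d^{-2\lambda j}\,\Theta(d^{\lambda(n-2+2j)})=\Theta(d^{\lambda(n-2)})$. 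Fixing an integer $s>(n-1)/2$ and integrating over the fixed compact manifold $S$ gives $\mathbb{E}\,\|\tilde{f}\|_{H^s(S)}^2\leq C_{n,s}\,d^{\lambda(n-2)}$. Since $s>\dim S/2$, the Sobolev embedding $H^s(S)\hookrightarrow C^0(S)$ yields $\max_S|\tilde{f}|\leq C_S\|\tilde{f}\|_{H^s(S)}$, and therefore
\[
\mathbb{E}\max_{\Sigma}|f| \;=\; \mathbb{E}\max_{S}|\tilde{f}| \;\leq\; C_S\,\mathbb{E}\|\tilde{f}\|_{H^s(S)} \;\leq\; C_S\big(\mathbb{E}\|\tilde{f}\|_{H^s(S)}^2\big)^{1/2} \;\leq\; c_2\, d^{\lambda(n-2)/2},
\]
which is the claim.

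The one point requiring genuine care is the \emph{uniformity} of all constants in $d$ and $x$: one must check that the change of variables $u\mapsto\exp_x(d^{-\lambda}u)$, its Jacobian, and the metric it induces on $S$ versus on $\Sigma$ are all uniformly comparable as $d\to\infty$ — which holds precisely because $\rho_n d^{-\lambda}\to 0$ forces $\Sigma$ into a shrinking coordinate ball on which $\exp_x$ is uniformly controlled, while isotropy of $f$ removes the dependence on $x$. If one prefers to avoid the Sobolev machinery, an equivalent route is Dudley's entropy bound: for the canonical pseudo-metric $\rho(u,v)=(\mathbb{E}|f(u)-f(v)|^2)^{1/2}\leq L\,d_{S^n}(u,v)$ with $L=\sup\sqrt{\mathbb{E}|\nabla_{S^n}f|^2}=\sqrt{nF'(1)}=\Theta(d^{\lambda n/2})$, the set $\Sigma$ has $\rho$-diameter $\leq 2\rho_n d^{-\lambda}L=\Theta(d^{\lambda(n-2)/2})$ and $(n-1)$-dimensional metric entropy, so the entropy integral is $O(d^{\lambda(n-2)/2})$; adding the single-point term $(F(1))^{1/2}=O(d^{\lambda(n-2)/2})$ and using $\mathbb{E}\max_\Sigma|f|\leq 2\mathbb{E}\max_\Sigma f$ gives the same bound.
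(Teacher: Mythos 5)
Your argument is correct, but it follows a genuinely different route from the paper. The paper works directly with the harmonic expansion of $f$ restricted to the slice: it writes $f|_{\partial D(x,r)}$ degree by degree as $(\sin r)^m$ times a random spherical harmonic on $S^{n-1}$ with i.i.d.\ coefficients of variance $\sigma(m,d)^2$, bounds each sup-norm by Lemma \ref{lemma:sup} (the $L^\infty$--$L^2$ bound for spherical harmonics), controls $\sigma(m,d)$ via the maximum of Gegenbauer polynomials together with the second part of Lemma \ref{lemma:rescaling}, and then the key cancellation $(\sin r)^m\leq (y/d^{\lambda})^m$ against $\sigma(m,d)\lesssim (c_3 d^{\lambda})^m d^{\lambda(n-2)/2}/\sqrt{m!}$ makes the series over $m$ converge to an explicit constant times $d^{\lambda(n-2)/2}$. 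You instead estimate the variances of all intrinsic derivatives through $F^{(k)}(1)=\Theta(d^{\lambda(n-2+2k)})$ (an extension of Lemma \ref{lemmaC} combined with Lemma \ref{lemma:rescaling}), rescale by $d^{-\lambda}$ so that every derivative has variance $\Theta(d^{\lambda(n-2)})$ on a fixed slice, and then invoke either Sobolev embedding $H^s\hookrightarrow C^0$ with $s>(n-1)/2$ or Dudley's entropy bound. This buys generality and brevity -- the argument uses nothing about spherical harmonics on the slice beyond smoothness and isotropy, and would apply verbatim to any smooth invariant field with the same covariance scaling -- whereas the paper's expansion keeps the constants explicit in terms of the weights $p_d(\ell)$, in line with its quantitative aims. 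Two small points of hygiene: the pointwise variance of a $k$-th intrinsic derivative is not literally a constant times $F^{(k)}(1)$ but a combination of $F^{(j)}(1)$, $j\leq k$, with bounded geometric coefficients (which is exactly the form you use in the displayed bound, and $F^{(k)}(1)$ dominates, so nothing is lost); and in the Dudley variant the inequality $\EE\max_{\Sigma}|f|\leq 2\,\EE\max_{\Sigma}f$ should be replaced by the standard $\EE\sup_\Sigma|f|\leq \EE|f(u_0)|+2\,\EE\sup_\Sigma\bigl(f-f(u_0)\bigr)$, which is harmless since you already include the single-point term $F(1)^{1/2}=O(d^{\lambda(n-2)/2})$.
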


\subsection{Proof of Lemma \ref{lemma1}}

Choose constants $0< A < B$ such that $\int_{A}^{B} \psi(x) dx > 0$.
We will choose 
\begin{equation}\label{eq:defr}
r=\frac{2y_n}{2Bd^\lambda+n-1},
\end{equation}
where $y_n$ is the first point after zero where the Bessel function $J_{\frac{n-2}{2}}$ reaches a minimum.

We define our barrier function $B_x$ by normalizing the function
$$\sum_{\ell \in [A d^\lambda, B d^\lambda]} {p_d(\ell)} Y_\ell(\theta).$$
Since the terms in this sum are orthonormal, the $L^2$-norm is simply the square root of the number of terms 
Card$[A d^\lambda, B d^\lambda] = \Theta(d^{\lambda/2})$, and we have:
$$B_x := \frac{\sum_{\ell \in [A d^\lambda, B d^\lambda]} {p_d(\ell)} Y_\ell(\theta)}{\sqrt{\text{Card}[A d^\lambda, B d^\lambda]}} .$$

Evaluating at the North pole, we have $Y_\ell(0) = \Theta( \ell^{(n-1)/2})$,
and in the range $A d^\lambda \leq \ell \leq B d^\lambda$, $Y_\ell(0) = \Theta( d^{\lambda(n-1)/2})$, so
$$ B_x(x) = \Theta \left( d^{-\lambda/2} d^{\lambda(n-1)/2} \sum_{\ell \in [A d^\lambda, B d^\lambda]}  {p_d(\ell)} \right) = \Theta \left( d^{\lambda (n-2)/2} \right) .$$

In order to see that $B_x$ is negative with the same order on $\pd D$,
first we write $Y_\ell$ in terms of a Jacobi polynomial:
$$Y_\ell(\theta)=c(n,\ell) P_{\ell}^{(\frac{n-2}{2},\frac{n-2}{2})}(\cos \theta),$$
where $P_{\ell}^{(\frac{n-2}{2},\frac{n-2}{2})}$ is a Jacobi polynomial
and $c(n,\ell)$ has order $\Theta(\ell^{1/2})$ \cite[Appendix]{LerarioLundberg}.

Using Szeg\"o's generalization of the Mehler-Heine asymptotic (\cite[Thm. 8.21.12]{Szego}) we have:
$$P_{\ell}^{(\frac{n-2}{2},\frac{n-2}{2})}(\cos \theta) = \bigg(\sin \frac{\theta}{2}\cdot\cos \frac{\theta}{2}\bigg)^{\frac{2-n}{2}}\bigg\{ h(n,\ell)\bigg(\frac{\theta}{\sin \theta}\bigg)^{1/2}J_{\frac{n-2}{2}}(N \theta)+R_\ell(\theta)\bigg\},$$
where $N=\frac{2\ell + n-1}{2},$ $h(n,\ell)=\Theta(1)$, and the error term $R_\ell(\theta)$ is always less than $\theta^{1/2}O(\ell^{-\frac{3}{2}}).$

Since by definition $r=\frac{2y_n}{2Bd^\lambda+n-1}$ we have $(\sin \frac{r}{2}\cdot\cos \frac{r}{2})^{\frac{2-n}{2}}=\Theta(d^{\lambda \frac{n-2}{2}}),$ and plugging all this into the definition of $B_x$, 
we have:
$$B_x(r) = \Theta(d^{-\frac{\lambda}{2}})\Theta(d^{\lambda \frac{n-2}{2}})\sum_{\ell \in [A d^\lambda, B d^\lambda]} {p_d(\ell)} \Theta(\ell^{1/2})\bigg\{\bigg(\frac{r}{\sin r}\bigg)^{1/2}J_{\frac{n-2}{2}}(r(2\ell+n-1)/2)+R_\ell(r)\bigg\}.$$

Note that the argument of $J_{\frac{n-2}{2}}$ satisfies:
\begin{equation}
 r(2\ell+n-1)/2 = y_n \frac{2 \ell + n - 1}{2Bd^\lambda+n-1} \leq y_n.
\end{equation}
Moreover, by increasing the value $A$ if necessary, we may assume that, for all $\ell \in [Ad^\lambda, B d^\lambda]$, 
the value of $r(2\ell+n-1)/2$ is within an interval $[(1 - \epsilon)y_n,y_n]$ where $J_{\frac{n-2}{2}}$ is decreasing and $J_{\frac{n-2}{2}}((1 - \e)y_n)< 0$.

Then, for $B_x(r)$ we have:
$$B_x(r) = \Theta(d^{\lambda \frac{n-3}{2}})\sum_{\ell\in [Ad^\lambda, B d^\lambda]}{p_d(\ell)} \Theta(\ell^{1/2})J_{\frac{n-2}{2}}((1-\e) y_n) \leq - c_1 d^{\lambda \frac{n-2}{2}} ,$$
where we have neglected the lower-order error terms $R_\ell(r) = r^{1/2} O(\ell^{-3/2})$.
This concludes the proof of Lemma \ref{lemma1}.

\subsection{Proof of Lemma \ref{lemma2}}

Expanding $f$ in an orthonormal basis of spherical harmonics and restricting to $\pd D(x,r)$,
we have (following the notation in \cite{LerarioLundberg}):

$$ f|_{\pd D(x,r)} = f(r,\phi) =  \sum_{m=0}^{d} \sum_{j \in I_m} \hat{\xi}_{m,j} (\sin r)^m Y_j(\phi) = \sum_{m=0}^{d} (\sin r)^m \sum_{j \in I_m} \hat{\xi}_{m,j} Y_j(\phi), $$ 
where
 \begin{equation}\label{eq:newxi}
  \hat{\xi}_{m,j} =\sum_{\ell\in L_m} {p_d(\ell)} \xi_{\ell,m,j} N_\ell^m P^{\left( \frac{n-1}{2}+m \right)}_{\ell-m}(\cos r) ,\quad j\in I_m, \quad \xi_{\ell,m,j} \sim N(0,1).
 \end{equation}

By the addition formula for independent Gaussians we have:
$$\hat{\xi}_{m,j} \sim N(0,\sigma(m,d)^2 ),$$ where:
\begin{equation}\label{eq:sigma}
\sigma(m,d)^2 = \sum_{\ell=m}^d \big\{{p_d(\ell)} N_\ell^m P^{\left( \frac{n-1}{2}+m \right)}_{\ell-m}(\cos r)\big\}^2 .
 \end{equation}

Using the triangle inequality, we bound the expectation of 
$$M:=\| f|_{\pd D(x,r)} \|_\infty = \max_{\phi \in S^{n-1}} |f(r,\phi)|$$ as:
$$\mathbb{E}M\leq \sum_{m=0}^{d} (\sin r)^m \EE \max_{\phi \in S^{n-1}} \left| \sum_{j \in I_m} \hat{\xi}_{m,j} Y_j(\phi) \right| .$$
The function:
$$ F(\phi)=\sum_{j \in I_m} \hat{\xi}_{m,j} Y_j(\phi)$$
is a random spherical harmonic on $S^{n-1}$ with independent identically distributed Gaussian coefficients
(since $\sigma(m,d)$ is independent of $j$ for each fixed $m$).  
This allows us to use the following bound for the expected sup-norm (over the whole sphere $S^{n-1}$) which is
based on a reverse-H\"older inequality, an estimate for spherical harmonics giving an $L^\infty$ bound in terms of the $L^2$-norm \cite[Appendix]{LerarioLundberg}:

\begin{lemma}\label{lemma:sup}
Let $F(\phi)$ be a random spherical harmonic.  Then
$$\EE \max_{\phi \in S^{n-1}} \left|F(\phi) \right| \leq \sigma(m,d) \cdot d(n-1,m),$$ 
where $d(n-1,m)$ is the dimension of the space of spherical harmonics of degree $m$ in $n$ variables.
\end{lemma}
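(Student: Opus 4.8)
The plan is to bound the expected sup-norm of a random spherical harmonic $F$ on $S^{n-1}$ by first reducing to a pointwise estimate. Since $F$ is a random spherical harmonic of degree $m$ with i.i.d.\ Gaussian coefficients of common variance $\sigma(m,d)^2$, at each fixed point $\phi$ the random variable $F(\phi)$ is Gaussian with variance $\sigma(m,d)^2 \sum_{j\in I_m}Y_j(\phi)^2 = \sigma(m,d)^2 \cdot d(n-1,m)/|S^{n-1}|$ by the addition formula (the sum of squares of an orthonormal basis of degree-$m$ harmonics is the constant $d(n-1,m)/|S^{n-1}|$). So pointwise $\EE|F(\phi)| \leq \sqrt{2/\pi}\,\sigma(m,d)\sqrt{d(n-1,m)/|S^{n-1}|}$.

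The key step is then to upgrade this pointwise control to uniform control. First I would invoke the deterministic $L^\infty$-to-$L^2$ estimate for spherical harmonics cited from \cite[Appendix]{LerarioLundberg}: for any (deterministic) spherical harmonic $G$ of degree $m$ on $S^{n-1}$ one has $\|G\|_{L^\infty(S^{n-1})} \leq c(n,m)\|G\|_{L^2(S^{n-1})}$ with $c(n,m)$ of order $d(n-1,m)^{1/2}$ (up to constants). Taking expectations and using $\EE\|F\|_{L^2(S^{n-1})} \leq (\EE\|F\|_{L^2}^2)^{1/2} = \sigma(m,d)\sqrt{d(n-1,m)}$ (the $L^2$-norm squared is $\sum_j \hat\xi_{m,j}^2$ with each term of variance $\sigma^2$, summing to $\sigma^2 d(n-1,m)$), we get $\EE\|F\|_\infty \leq c(n,m)\,\sigma(m,d)\sqrt{d(n-1,m)}$, which after absorbing dimensional constants gives the claimed bound $\sigma(m,d)\cdot d(n-1,m)$.

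The main obstacle — or rather the main thing to be careful about — is the reverse-Hölder / $L^\infty$-$L^2$ comparison: one must make sure the constant in the Nikolskii-type inequality for spherical harmonics on $S^{n-1}$ is genuinely of order $d(n-1,m)^{1/2}$ and not worse (in fact the square of the $L^\infty$ bound of the zonal harmonic gives exactly $d(n-1,m)$, so an application of Cauchy–Schwarz against the reproducing kernel does the job cleanly, since $\|G\|_\infty = \sup_\phi |\langle G, Z_\phi\rangle| \leq \|G\|_2 \|Z_\phi\|_2$ and $\|Z_\phi\|_2^2 = Z_\phi(\phi) = d(n-1,m)/|S^{n-1}|$). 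Once this deterministic inequality is in hand, the randomness enters only through the elementary $\EE\|F\|_2 \leq (\EE\|F\|_2^2)^{1/2}$, and the statement follows; the factor $d(n-1,m)$ (rather than $d(n-1,m)^{1/2}$) in the conclusion is exactly what accommodates both the $L^\infty$-$L^2$ constant and the $L^2$-norm estimate together, so no delicate tracking of subconstants is needed.
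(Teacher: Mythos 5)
Your proof is correct and follows essentially the same route as the paper, which simply invokes the reverse-H\"older ($L^\infty$-versus-$L^2$) estimate for spherical harmonics from the appendix of \cite{LerarioLundberg} together with $\EE\|F\|_{L^2}\leq(\EE\|F\|_{L^2}^2)^{1/2}=\sigma(m,d)\sqrt{d(n-1,m)}$; your reproducing-kernel Cauchy--Schwarz argument is exactly the content of that cited estimate. The only cosmetic difference is the factor $|S^{n-1}|^{-1/2}$ your computation produces, which depends only on $n$ (fixed throughout) and is absorbed into the constants in the application, or disappears entirely if one normalizes the $Y_j$ with respect to the probability measure on $S^{n-1}$.
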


Applying Lemma \ref{lemma:sup} to $\EE M$:
\begin{equation}\label{max}\mathbb{E}M\leq \sum_{m=0}^{d} (\sin r)^m \sigma(m,d) \cdot d(n-1,m).\end{equation}
Next we use an estimate\footnote{This is based on the known maximum of the Gegenbauer polynomials \cite[Formula (7.33.1)]{Szego}.} from \cite{LerarioLundberg}
$$(N_\ell^m)^2 \left( P^{\left( \frac{n-1}{2}+m \right)}_{\ell-m}(\cos r) \right)^2 \leq a_0 (2\ell+n-1) {\binom{\ell + m + n - 2}{\ell-m}} ,$$
in order to bound $\sigma(m,d)$ as follows:
 \begin{align*}\sigma(m,d)^2& \leq  a_0 \sum_{\ell=m}^d (2\ell+n-1) {\binom{\ell + m + n - 2}{\ell-m}} {p_d(\ell)}^2 \\
 &\leq \frac{a_0 (2n)^{2m+n-1}}{(2m + n - 2)!}\sum_{\ell=0}^d \ell^{2m+n-1} {p_d(\ell)}^2 .
 \end{align*}
 where $a_0$ is a constant that does not depend on $d$.
 We use the second part of Lemma \ref{lemma:rescaling}:
  \begin{align*}
 \frac{a_0 (2n)^{2m+n-1}}{(2m + n - 2)!}\sum_{\ell=0}^d \ell^{2m+n-1} {p_d(\ell)}^2 &\leq  \frac{a_0 (2n)^{2m+n-1}}{(2m + n - 2)!} d^{\lambda(2m+n-2)} \frac{2^{-1-m-n/2}\Gamma(m+n/2)}{c_2^{m+n/2}} \\
 &\leq  \frac{(c_3d^\lambda)^{2m}\Gamma\left(m+\frac{n}{2}\right)}{(2m+n-2)!} d^{\lambda(n-2)}\leq \frac{(c_3d^{\lambda})^{2m}}{m!}d^{\lambda(n-2)}.
 \end{align*}
  
In particular $\sigma(m,d) \leq  \frac{(c_3d^{\lambda})^{m}}{\sqrt{m!}}d^{\frac{\lambda(n-2)}{2}}$.
 Recalling that $r=\frac{2y_n}{2Bd^\lambda+n-1}\leq \frac{y}{d^\lambda}$ (we have set $y=y_n / B$ for simplicity of notation) 
 and using $(\sin r)^m \leq r^m \leq \left( \frac{y}{d^\lambda} \right)^m$, we have in (\ref{max}):
 $$\EE M\leq \sum_{m=0}^d \left( \frac{y}{d^\lambda} \right)^m \cdot \sigma(m,d) \cdot d(n-1,m) \leq a_1 d^{\lambda(n-2)/2}\sum_{m=0}^d \frac{c_4^{m}}{\sqrt{m!}} \cdot d(n-1,m).$$
 We estimate $d(n-1,m) \leq a_2 m^{n-2} \leq {a_4}^m$, so that using the fact that $\sum_m x^m/\sqrt{m!}=g(x)$ is a convergent power series, we obtain:
$$\EE M \leq g(c_5)d^{\lambda(n-2)/2} .$$
 This concludes the proof of Lemma \ref{lemma2}.

\subsection{Proof of Theorem \ref{thm:lowerbound}}
As explained in the comments following the statement of Theorem \ref{thm:lowerbound}, it suffices to prove the lower bound (\ref{eq:P}).

Using $\B$ provided by Lemma \ref{lemma1} we can decompose $W_{n,d}=\textrm{span}\{\B\}\oplus\textrm{span}\{\B\}^\perp$, 
thus getting the following decomposition for $f$:
$$f=\xi_0 \B+f^\perp\quad\textrm{with}\quad \xi_0\sim N(0, 1).$$
We let now $\tilde \xi_0$ be a random variable distributed as $\xi_0$ but independent of it and we define ${f}_\pm=\pm \tilde \xi_0 \B+f^\perp$. 
Notice that both $f$ and ${f}_\pm$ have the same distribution. 
The introduction of these new random polynomials allows us to write:
\begin{equation}\label{splitf}f=\xi_0 \B+\frac{1}{2}({f}_{+}+{f}_{-})\end{equation}
and to split our problem into the study of the behavior of $\B$ and ${f}_\pm$ separately. 
In fact the event $\Omega(x, r)$ happens provided that for some constant $a_2>0$ the two following events both happen:
\begin{itemize}
\item[1)]$E(x,r)=\left\{\xi_0\B(x)\geq 2a_2 d^{\frac{\lambda(n-2)}{2}} \textrm{ and } \xi_0B_x|_{\partial D(x, r)}\leq -2a_2 d^{\frac{\lambda(n-2)}{2}}\right\}$;
\item[2)]$G(x,  r)=\left\{ |{f}_\pm(x)| \leq a_2 d^{\frac{\lambda(n-2)}{2}}\textrm{ and }\|{f}_{\pm}|_{\partial D(x, r)}\|_\infty \leq a_2d^{\frac{\lambda(n-2)}{2}} \right\}.$
\end{itemize}
To check that $E(x,r)\cap G(x,r)$ implies $\Omega(x, r)$ we simply substitute the inequalities defining $E(x,r)$ and $G(x, r)$ in (\ref{splitf}) evaluating respectively at $x$ and at $\partial D(x, r)$.

Now by definition the two events $E(x,r)$ and $G(x, r)$ are independent and thus:
$$\PP\{E(x,r)\cap G(x, r)\}=\PP\{E(x,r)\}\PP\{G(x, r)\}.$$
Because of this it suffices to show that the probability of each one of them is bounded from below by a positive constant that does not depend on $d$ and this will provide a bound from below for the probability of $\Omega(x, r)$ which is independent of $d$.

The fact that the probability of $E(x,r)$ is bounded from below independently on $d$ immediately follows from Lemma \ref{lemma1}: the random variable $\xi_0\B(x)d^{-\frac{\lambda(n-2)}{2}}$ is Gaussian with mean zero and variance $\Theta(1)$; similarly for the boundary. Thus for $a_3$ small enough the probability of it being bigger than $a_3$ is uniformly bounded from below.

It remains to study $G(x, r)$; to this extent notice that this event is given by the intersection of the four events 
$G_{1,2}=\{|{f}_\pm(x)| \leq a_2d^{\frac{\lambda(n-2)}{2}}\}$ and  $G_{3,4}=\{\|{f}_\pm|_{\partial D(x, r)}\|_\infty \leq a_2d^{\frac{\lambda(n-2)}{2}}\}$. For these we do not need independence as:
$$\PP\big\{\bigcap_{i} G_i\big\}=1-\PP\big\{\big(\bigcap_{i} G_i\big)^c\big\}=1-\PP\big\{\bigcup_{i} G_i^c\big\}\geq 1-\sum_i \PP\{G_i^c\}$$
and it is enough to prove that the probability of the complement of each one of them is small.

By rotation invariance, we have $\PP \{ G_{1,2}^c \} \leq \PP \{ G_{3,4}^c \}$,
and since $f_\pm$ are each distributed as $f$, 
the probability of each $G_{3,4}^c$ is exactly the probability of the following event:
\begin{equation}\label{eq:event}
 \{\|f|_{\partial D(x, r)}\|_\infty \geq a_2d^{\frac{\lambda(n-2)}{2}} \quad \textrm{for a random} f\in W_{n,d}\}.
\end{equation}

Hence we consider the positive random variable: $$M:=\max_{\partial D(x, r)}|f|.$$
By Lemma \ref{lemma2}, we have $\EE M \leq c_2 d^{\lambda (n-2)/2}$.
Applying Markov's inequality to $M$:
$$\PP  \{M \geq a_2 d^{\lambda(n-2)/2} \} \leq   \frac{c_2}{a_2} .$$

Choosing $a_2$ sufficiently large, we have that the probability of the event (\ref{eq:event}) is at most $c_2/a_2 < 1/5$,
and therefore $\PP \{ G(x,r) \} > 1-4/5 = 1/5$ which concludes the proof.

\section{Slice sampling}
\subsection{The restriction of a random polynomial to a circle}
In this section we restrict our random function to a one-dimensional slice.
We study the average number of zeros of the restriction and compare the asymptotic behavior to above results on connected components.
Notice that restricting to a circle or to a projective line are essentially equivalent approaches since the number of points in $\{f|_{S^1}=0\}$ is twice the number of points of $\{f=0\}$ on $\RP^1$.

The restriction of $f$ to a circle $S^1$ is a classical random univariate polynomial, and we can relate our results to classical studies.

\begin{lemma}\label{lemma:comparisonparameter}
Referring to (\ref{covariance1}), 
the parameters $\beta_0, \ldots, \beta_{\lfloor d/2\rfloor}$ for $f$ and $f|_{S^1}$ coincide. Moreover the number $B$
appearing in Theorem \ref{thm:fyodorov}  also coincides for $f$ and for $f|_{S^1}$.

\end{lemma}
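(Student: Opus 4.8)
The plan is to reduce both assertions to a single observation: the function $F$ appearing in Theorem \ref{thm:fyodorov} depends only on the coefficients $\beta_0,\dots,\beta_{\lfloor d/2\rfloor}$ of \eqref{covariance1}, and these coefficients survive restriction to a $2$-plane untouched.

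First I would treat the parameters $\beta_k$. Fix a two-dimensional linear subspace $V\subset\RR^{n+1}$ with $V\cap S^n=S^1$. The restriction $f|_V$ is a random homogeneous polynomial of degree $d$ in two variables, and its covariance is obtained by evaluating \eqref{covariance1} at $x,y\in V$. Since the Euclidean norms $\|x\|,\|y\|$ and the inner product $\langle x,y\rangle$ of vectors lying in $V$ agree with the ones computed in the ambient $\RR^{n+1}$, we get
$$\EE\{f|_V(x)\,f|_V(y)\}=\sum_{k=0}^{\lfloor d/2\rfloor}\beta_k\|x\|^{2k}\|y\|^{2k}\langle x,y\rangle^{d-2k},\qquad x,y\in V,$$
which is exactly the invariant form \eqref{covariance1} written in two variables, with the \emph{same} coefficients $\beta_k$; one also checks that this is a genuine orthogonally invariant Gaussian ensemble on $W_{1,d}$, so that the $\beta_k$ really are the parameters of $f|_{S^1}$. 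This settles the first claim.

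For the number $B$ I would make the function $F$ explicit. Restricting \eqref{covariance1} to the sphere $\|x\|=\|y\|=1$ gives $\EE\{f(x)f(y)\}=\sum_k\beta_k t^{d-2k}$ with $t=\langle x,y\rangle$; and for $x=(\sqrt{1-\|\hat x\|^2},\hat x)$, $y=(\sqrt{1-\|\hat y\|^2},\hat y)$ on $S^n$ one has $\langle x,y\rangle=\langle\hat x,\hat y\rangle+\sqrt{1-\|\hat x\|^2}\sqrt{1-\|\hat y\|^2}$ (this is the argument in \eqref{phat}). Matching with the hypothesis of Theorem \ref{thm:fyodorov} one reads off
$$F(t)=\sum_{k=0}^{\lfloor d/2\rfloor}\beta_k\,t^{d-2k},$$
a polynomial determined solely by the $\beta_k$ and independent of $n$ (equivalently, this is the monomial rewriting of the Gegenbauer expansion \eqref{eq:covariance}, both expressions equalling the covariance of $f$ as a function of the cosine of the angle). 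The identical computation applied to $f|_{S^1}$ — i.e.\ Theorem \ref{thm:fyodorov} with $n$ replaced by $1$ — produces the same $F$. Since $B=\bigl(F''(1)-F'(1)\bigr)/\bigl(F''(1)+F'(1)\bigr)$ is a function of $F$ alone, and $F$ coincides for $f$ and $f|_{S^1}$ by the first part, the value of $B$ coincides as well.

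I do not expect a genuine obstacle here: the only care needed is bookkeeping — verifying that the chart $\hat x\mapsto(\pm\sqrt{1-\|\hat x\|^2},\hat x)$ of \eqref{phat} specializes correctly to dimension one, and confirming that the Gegenbauer form \eqref{eq:covariance} and the monomial form $\sum_k\beta_k t^{d-2k}$ are literally the same polynomial $F$. The content of the lemma is precisely that $F$, and hence $B$, depends only on the ``angular'' part of the covariance, which is exactly what restriction to a $2$-plane preserves.
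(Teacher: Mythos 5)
Your proof is correct and follows essentially the same route as the paper: restrict the covariance \eqref{covariance1} to a $2$-plane (where norms and inner products are unchanged) to see the $\beta_k$ coincide, then observe that the function $F(t)=\sum_k\beta_k t^{d-2k}$ in Theorem \ref{thm:fyodorov} is determined by the $\beta_k$ alone. The only cosmetic difference is that the paper goes one step further and computes $B=1-2\sum_k\beta_k(d-2k)/\sum_k\beta_k(d-2k)^2$ explicitly (a formula it reuses later), whereas you stop at the observation that $F$, hence $B$, is literally the same for $f$ and $f|_{S^1}$ — which suffices for the lemma.
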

\begin{proof}

We start by noticing that using the description of the covariance structure for $f$ as in \eqref{covariance1} we obtain the following for the covariance structure of $f|_{\{x_2=\cdots= x_n=0\}}$:
$$F(x_0,x_1,y_0,y_1)=\sum_{k=0}^{\lfloor \frac{d}{2}\rfloor}\beta_k\|(x_0,x_1)\|^{2k}\|(y_0,y_1)\|^{2k}\langle (x_0,x_1), (y_0, y_1)\rangle^{d-2k}$$
and we see that the parameters $\beta_0, \ldots, \beta_{\lfloor d/2\rfloor}$ for $f$ and $f|_{S^1}$ coincide.

On the other hand writing the covariance structure $F$ of $\hat{p}_{\pm}$ (defined by \eqref{phat}) using the expression \eqref{covariance1} we obtain
$$F=\sum \beta_k F_k\quad \textrm{where}\quad 
F_k(t)=t^{d-2k}.$$
In this way we immediately see that $F_k'(1)=d-2k$ and $F_k''(1)= (d-2k)(d-2k-1)$. Substituting this into the definition of $B$, we obtain:
\begin{equation}\label{eq:Bbeta}B=\frac{\sum_{k}\beta_k(d-2k)^2-2\sum_k \beta_k(d-2k)}{\sum_k \beta_k (d-2k)^2}=1-2\frac{\sum_k \beta_k(d-2k)}{\sum_k \beta_k (d-2k)^2}.\end{equation}
In particular the number $B$ for $f$ is the same as for $f|_{S^1}$.
\end{proof}

\subsection{The parameter of a random polynomial}
Given a random polynomial in $W_{n,d}$ whose covariance structure is given by \eqref{covariance1} we define the number $\delta(f)$ (the \emph{parameter of the distribution}):
$$\delta(f)=\sqrt{\frac{\sum_{k=0}^{\lfloor d/2\rfloor} \beta_k(d-2k)}{\sum_{k=0}^{\lfloor d/2\rfloor}\beta_k}}$$
(alternatively if $F$ is given in the form \eqref{covariance2} the parameter can be computed from \cite[Thm 5.7]{Kostlan}).
The parameter gives the average number of real zeros of the restriction of $f$ to any $\RP^1$ (or one half the number of zeros of the restriction of $f$ to any fixed circle $S^1\subset S^n$).

\begin{lemma}\label{lemma:paras}For a coherent family of ensembles:
$$\delta(f)=\Theta(d^{\lambda})$$

\end{lemma}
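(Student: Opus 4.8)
The plan is to reduce $\delta(f)^2$ to the ratio $F'(1)/F(1)$, where $F$ is the univariate covariance function appearing in \eqref{covariance1}, \eqref{covariance2} and \eqref{eq:covariance}, and then to estimate the numerator and denominator separately by the two halves of Lemma \ref{lemma:rescaling}. Dividing \eqref{covariance1} by $\|x\|^d\|y\|^d$ shows that, as a function of $t=\frac{\langle x,y\rangle}{\|x\|\|y\|}$, the covariance equals $F(t)=\sum_{k=0}^{\lfloor d/2\rfloor}\beta_k t^{d-2k}$; performing the same operation on \eqref{covariance2} and using the normalization $\tilde C_\ell^{\frac{n-1}{2}}(1)=1$ identifies this $F$ with the one in \eqref{eq:covariance}. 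Hence $\sum_k\beta_k=F(1)$ and, differentiating term by term, $\sum_k\beta_k(d-2k)=F'(1)$, so that by definition
$$\delta(f)^2=\frac{\sum_k\beta_k(d-2k)}{\sum_k\beta_k}=\frac{F'(1)}{F(1)}.$$
(This is consistent with Lemma \ref{lemma:comparisonparameter}, which says the $\beta_k$, and therefore $\delta(f)$, are unchanged when $f$ is restricted to $S^1$.)

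Next I would read off $F(1)$ and $F'(1)$ in terms of the weights. From \eqref{eq:covariance} together with Lemma \ref{lemmaC} (equivalently, with \eqref{f1}),
$$F(1)=\sum_{d-\ell\in 2\mathbb{N}}p_d(\ell)^2\frac{d(n,\ell)}{|S^n|},\qquad F'(1)=\sum_{d-\ell\in 2\mathbb{N}}p_d(\ell)^2\frac{d(n,\ell)}{|S^n|}f^{(1)}(n,\ell),$$
with $f^{(1)}(n,\ell)=\frac{(n+\ell-1)\ell}{n}=\frac{\ell^2}{n}+O(\ell)$. Substituting $d(n,\ell)=\frac{2}{(n-1)!}\ell^{n-1}+O(\ell^{n-2})$ from \eqref{eq:dimharmonic}, one finds that $F(1)$ is, up to the factor $|S^n|^{-1}$, equal to $\frac{2}{(n-1)!}\sum_\ell \ell^{n-1}p_d(\ell)^2$ plus a sum with summands $O(\ell^{n-2})p_d(\ell)^2$, while $F'(1)$ is $\frac{2}{n!}\sum_\ell \ell^{n+1}p_d(\ell)^2$ plus a sum with summands $O(\ell^{n})p_d(\ell)^2$.

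Finally, I would apply Lemma \ref{lemma:rescaling} with $b=2$ and the exponents $a\in\{n-1,n,n+1\}$ (and $a=n-2$ for the lower-order term): the leading sums satisfy $\sum_\ell\ell^{n+1}p_d(\ell)^2\sim\frac{d^{\lambda n}}{2}\mu_{n+1}(\psi^2)$ and $\sum_\ell\ell^{n-1}p_d(\ell)^2\sim\frac{d^{\lambda(n-2)}}{2}\mu_{n-1}(\psi^2)$, whereas the $O$-terms are $O(d^{\lambda(n-1)})$ and $O(d^{\lambda(n-3)})$, of strictly smaller order in $d$. Therefore $F'(1)\sim\frac{1}{n!\,|S^n|}d^{\lambda n}\mu_{n+1}(\psi^2)$ and $F(1)\sim\frac{1}{(n-1)!\,|S^n|}d^{\lambda(n-2)}\mu_{n-1}(\psi^2)$, so that
$$\delta(f)^2\sim\frac{d^{2\lambda}}{n}\,\frac{\mu_{n+1}(\psi^2)}{\mu_{n-1}(\psi^2)}.$$
Since $\psi$ is nonzero and dominated by a subgaussian function, both $\mu_{n+1}(\psi^2)$ and $\mu_{n-1}(\psi^2)$ are finite and strictly positive, so the right-hand side is $\Theta(d^{2\lambda})$ and $\delta(f)=\Theta(d^\lambda)$, as claimed. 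The only genuinely delicate step is the first one — checking that the $F$ assembled from the $\beta_k$ is the same univariate function as the Gegenbauer series in \eqref{eq:covariance}, which legitimizes the use of Lemma \ref{lemmaC} to evaluate $F'(1)$; everything after that is a direct bookkeeping application of the two parts of Lemma \ref{lemma:rescaling}.
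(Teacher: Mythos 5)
Your proof is correct and follows essentially the same route as the paper: both reduce $\delta(f)^2$ to the ratio $\frac{\sum_\ell \ell(\ell+n-1)d(n,\ell)p_d(\ell)^2}{n\sum_\ell d(n,\ell)p_d(\ell)^2}$ and then apply Lemma \ref{lemma:rescaling} with $b=2$ and the exponents $a=n\pm 1$ (plus the lower-order terms) to get $\Theta(d^{2\lambda})$. The only difference is how that ratio is obtained: the paper quotes Kostlan's formula for $\delta(f)$ in terms of the weights $r_i$ and converts via \eqref{eq:ptor}, whereas you derive the same expression intrinsically from $\delta(f)^2=F'(1)/F(1)$ together with the Gegenbauer derivative values of Lemma \ref{lemmaC} --- a self-contained but equivalent bookkeeping step.
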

\begin{proof}
We need to use the expression for $\delta(f)$ in terms of the weights $r_i$ defined in \cite{Kostlan}:
$$\delta(f)=\sqrt{\frac{\sum_{i=0}^{\lfloor d/2\rfloor} (d-2i)(d-2i+n-1)r_i^2}{n\sum_{i=0}^{\lfloor d/2\rfloor}r_i^2 }}.$$
Using equation \eqref{eq:ptor} we can rewrite this as:
$$\delta(f)=\sqrt{\frac{\sum \ell(\ell+n-1)d(n,\ell){p_d(\ell)}^2}{n\sum d(n,\ell){p_d(\ell)}^2}}.$$
Using now  $\ell(\ell+n-1)d(n,l)=\Theta(\ell^{n+1})$ and $d(n,\ell)=\Theta(\ell^{n-1})$ together with Lemma \ref{lemma:rescaling} gives the stated asymptotic.

\end{proof}
Thus we can immediately derive the following corollary.

\begin{cor}[Slice sampling]\label{coro:sampling}For a coherent family of ensembles:
$$\EE b_0(X)=\Theta \left(\EE b_0(X\cap \RP^1)^n\right).$$
\end{cor}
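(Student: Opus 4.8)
The plan is to obtain the corollary by combining three facts already available in the paper: the upper bound $\EE b_0(X)=O(d^{\lambda n})$ of Theorem \ref{thm:upperbound}, the lower bound $\EE b_0(X)\geq \const\, d^{\lambda n}$ of Theorem \ref{thm:lowerbound}, and the estimate $\delta(f)=\Theta(d^{\lambda})$ of Lemma \ref{lemma:paras}. Putting the first two together gives immediately
$$\EE b_0(X)=\Theta(d^{\lambda n}).$$

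Next I would identify $\EE b_0(X\cap\RP^1)$ with $\delta(f)$. With probability one $f$ defines a smooth hypersurface and, moreover, $f|_{\RP^1}$ has only simple zeros, so $X\cap\RP^1$ is a finite set and $b_0(X\cap\RP^1)$ is exactly the number of real zeros of the univariate restriction $f|_{\RP^1}$; by orthogonal invariance of the ensemble this number is independent in distribution of the chosen line. By the discussion preceding Lemma \ref{lemma:paras} (the classical Kac--Rice/Edelman--Kostlan count, cf.\ \cite[Thm 5.7]{Kostlan}), together with Lemma \ref{lemma:comparisonparameter} which says the restricted ensemble has covariance \eqref{covariance1} with the same $\beta_k$, one gets
$$\EE b_0(X\cap\RP^1)=\delta(f)=\Theta(d^{\lambda}),$$
and hence, raising to the $n$-th power, $\big(\EE b_0(X\cap\RP^1)\big)^n=\Theta(d^{\lambda n})$.

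Comparing the two displays then yields
$$\EE b_0(X)=\Theta(d^{\lambda n})=\Theta\!\left(\big(\EE b_0(X\cap\RP^1)\big)^{n}\right),$$
which is the assertion of the corollary (interpreting $\Theta$ via the two-sided $O$-bound defined in the notation section). The only step requiring any care — and thus the main, if modest, obstacle — is the identification $\EE b_0(X\cap\RP^1)=\delta(f)$: one must invoke genericity to pass from the topological count $b_0$ of the slice to the analytic zero count to which the Kac--Rice formula applies, and invoke Lemma \ref{lemma:comparisonparameter} to be sure that the restriction of the invariant ensemble to $\RP^1$ is the univariate ensemble whose expected number of zeros is the parameter $\delta(f)$. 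Everything else is bookkeeping with the $\Theta(\cdot)$ notation.
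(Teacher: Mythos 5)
Your proposal is correct and follows essentially the same route as the paper: combine the upper bound of Theorem \ref{thm:upperbound} and the lower bound of Theorem \ref{thm:lowerbound} to get $\EE b_0(X)=\Theta(d^{\lambda n})$, then use the fact that the parameter $\delta(f)$ equals $\EE b_0(X\cap\RP^1)$ together with Lemma \ref{lemma:paras} to get $\EE b_0(X\cap\RP^1)=\Theta(d^{\lambda})$. Your extra care in justifying the identification $\EE b_0(X\cap\RP^1)=\delta(f)$ (genericity of simple zeros, Lemma \ref{lemma:comparisonparameter}) only makes explicit what the paper leaves implicit.
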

Motivated by \eqref{eq:Bbeta} we define also:
\begin{equation}\label{delta'}\delta'(f)=\sqrt{\frac{\sum_{k=0}^{\lfloor d/2\rfloor}  \beta_k (d-2k)^2}{\sum_{k=0}^{\lfloor d/2\rfloor}  \beta_k (d-2k)}}
\end{equation}
and in this way $(1-B)^{-1}=2\delta'(f)^2$, as it follows immediately from \eqref{eq:Bbeta}.

We prove now Proposition \ref{prop:minima}.
\begin{proof}
Notice that Lemma \ref{lemma:comparisonparameter} implies:
$$\delta'(f)=\delta'(f|_{S^1})$$
(and $\delta(f)=\delta(f|_{S^1})$ as well). Moreover for a coherent family of ensembles we have:
\begin{align*}\EE \#\{\textrm{minima of $f$}\}&=2(1+B)^{\frac{n+1}{2}}(1-B)^{-\frac{n}{2}}\int_{-\infty}^{+\infty}e^{-\frac{(n+1)Bt^2}{2}}\frac{d}{dt}\mathcal{F}_{n+1}(t)dt\\
& \sim 2^{n+\frac{3}{2}} \left(\delta(f)' \right)^nI_{n+1}\\
&= 2^{n+\frac{3}{2}} \left(\delta(f|_{S^1})' \right)^nI_{n+1}\\
&\sim c'_n\left(\EE \#\{\textrm{minima of $f|_{S^1}$}\}\right)^n.
\end{align*}
\end{proof}

\section{Comparison with \cite{Sodin}}\label{sectionNS}

In this section we discuss briefly the relations between the above methods and those of Nazarov and Sodin from \cite{Sodin}.

In order to match notations, let us recall the definitions from \cite{Sodin}. The authors consider a sequence of Gaussian ensembles $H_d$ of functions on a Riemannian manifold which in our case is the sphere $S^n$ or the projective space $\RP^n$; the gaussian ensemble is defined using a scalar product on $H_d$. With respect to this scalar product, the authors consider the reproducing kernel $K_d$, defined by:
$$f(y)=\langle f(\cdot), K_d(\cdot, y)\rangle_{H_d}.$$
In our case $H_d=\RR[x_0, \ldots, x_n]_d$ and the scalar product is orthogonally invariant. This reproducing kernel is exactly the covariance structure of the random $f$ (which the authors normalize to $K_d(x,x)=1$). 

In particular, using their notation, we see that:
$$K_d(x, y)=\frac{\sum_{d-l\in 2\mathbb{N}}p_d(\ell)^2d(n, \ell)Q_\ell^n(\cos \theta(x,y))}{\sum_{d-l\in 2\mathbb{N}}p_d(\ell)^2d(n,l)}$$
where $\theta(x,y)$ is the angle between $x$ and $y$, the function $Q_\ell^n$ is what we denoted by $\tilde{C}^{\frac{n-1}{2}}_{\ell}$, and the numerator comes from the normalization assumption, using $Q_\ell^n(\cos \theta(x,x))=1$.

Denoting by $\phi:T_xS^n\simeq \RR^n\to S^n$ the exponential map, the standing assumption from \cite{Sodin} is that:
\begin{equation}\label{NS}\lim_{d \to \infty} \sup_{\|u\|, \|v\|\leq R}|K_d(\phi(d^{-1}u), \phi(d^{-1}v))-k(u-v)|=0\end{equation}
where in fact the dependence on the point $x\in S^n$ is irrelevant, being the distribution rotational invariant.
As discussed in \cite[Example 2.5.4]{Sodin}, it is indeed possible to choose different scaling for $d$ (not just the linear one), 
and one can consider for example the condition:
\begin{equation}\label{NSl}\lim_{d\to \infty} \sup_{\|u\|, \|v\|\leq R}|K_d(\phi(d^{-\lambda}u), \phi(d^{-\lambda}v))-k(u-v)|=0\quad \textrm{for some $0<\lambda\leq 1$}.\end{equation}
Using this notation Theorem 4 from \cite{Sodin} can be read as follows. Assuming the \emph{nondegeneracy} of the gaussian ensembles \cite[Definition 3]{Sodin} and that the \emph{limiting spectral measure} has no atoms \cite[Section 1.1]{Sodin}, if condition \eqref{NSl} holds, then there exists a (possibly zero) constant $\overline{\nu}$ such that:
\begin{equation}\label{NST}\lim_{d\to \infty}\mathbb{E}\left\{\left|\frac{b_0(X)}{d^{\lambda n}} -\overline{\nu}\textrm{Vol}(S^n)\right|\right\}=0.\end{equation}
The rescaling condition \eqref{NSl} is implied by our coherence assumption,
and the rescaled covariance structure turns out to be the Fourier transform of $\psi(r)$ viewed as a function
of the radial coordinate $r$ in $\RR^n$.
Let us explain this in the simple case $n=1$
(the cases $n>1$ are similar but require asymptotics of the Gegenbauer polynomials).
We have
$$K_d(\theta)= \sum_{d-\ell\in 2\mathbb{N}} p_d(\ell)^2 \cos(\ell \theta) \sim \int_0^\infty \psi(x )^2 \cos(x d^\lambda \theta) dx.$$
Thus,
$$K_d(d^{-\lambda} \theta) \sim \int_0^\infty \psi(x)^2 \cos(x \theta) dx.$$
The subgaussian tail of $\psi$ implies that this Fourier transform is continuous (even smooth),
so that condition \eqref{NSl} is satisfied.
This implies that $\mathbb{E}b_0(X)/d^{\lambda n}$ indeed has a limit as $d\to \infty$.

\section{Proof of Theorem \ref{estimates}}
As in the above paragraphs, we consider the double cover $\overline{X}\subset S^n$ of $X\subset \RP^n$, the two are related by \eqref{cover}. Let us start by recalling that:
$$\lim_{d\to \infty}\frac{\EE b_0(\overline{X})}{d^{\lambda n}}\leq \lim_{d\to \infty}4\frac{\EE \mathcal{N}_m}{d^{\lambda n}}$$
where $\EE \mathcal{N}_m$ is the expectation of the number of minima of $f$ (the defining polynomial of $X$) on one \emph{half-sphere} (see Theorem \ref{thm:fyodorov}),
and the existence of the limit of $\EE b_0(\overline{X})/d^{\lambda n}$ follows from 
the results in \cite{Sodin} discussed in the previous section.
Recall also, from Proposition \ref{B:asympt}, that:
\begin{equation}\label{estmoments}\lim_{d\to \infty}B=1\quad \textrm{and}\quad (1-B)^{-1} \sim \frac{d^{2\lambda}}{2(n+2)}\frac{\int_{0}^\infty x^{n+3}\psi(x)^2 dx}{\int_{0}^\infty x^{n+1}\psi(x)^2 dx},
\end{equation}
where $B = \frac{F''(1)-F'(1)}{F''(1)+F'(1)}$.
Thus, using the above notation $\mu_{n}$ to denote the moments of $\psi^2$, we have:
$$(1-B)^{-1} \sim \frac{d^{2\lambda}}{2(n+2)}\frac{\mu_{n+3}}{\mu_{n+1}}.$$
Using the identity \eqref{minima}, we can thus write:
\begin{align}\label{extrema}\lim_{d\to \infty}\frac{\EE b_0(\overline{X})}{d^{\lambda n}} &\leq \lim_{d\to \infty}\frac{4}{d^{\lambda n}}(1+B)^{\frac{n+1}{2}}(1-B)^{-\frac{n}{2}}\int_{-\infty}^{+\infty}e^{-\frac{(n+1)Bt^2}{2}}\frac{d}{dt}\mathcal{F}_{n+1}(t)dt\\
&= 4 \cdot 2^{(n+1)/2}\left(\frac{1}{2(n+2)}\frac{\mu_{n+3}}{\mu_{n+1}} \right)^{n/2}\int_{-\infty}^{+\infty}e^{-\frac{(n+1)t^2}{2}}\frac{d}{dt}\mathcal{F}_{n+1}(t)dt\\
&\label{eqfinal}=2^{\frac{5}{2}}\left(\frac{1}{n+2}\frac{\mu_{n+3}}{\mu_{n+1}}\right)^{n/2}I_{n+1}.
\end{align}

The integral $I_{n+1}$ can be evaluated asymptotically when $n\to \infty$ \cite[Equation 82]{Fyodorov}:
\begin{equation}\label{asI}I_{n+1}\sim g_{n+1}\doteq A\cdot 2^{\frac{35}{16}}(n+1)^{-\frac{17}{36}}e^{-(n+1)+\frac{4 \sqrt{2}}{3}\sqrt{n}}\quad \textrm{where}\quad \log A= -\frac{169}{96}\log 2+\frac{1}{2}\zeta'(1),\end{equation}
and $\zeta'(1)$ is the derivative at one of the Riemann Zeta function ($\zeta'(1)\approx -0.1654$).

\begin{remark}For the case of \emph{curves} ($n=2$), the integral $I_3$ can be evaluated exactly and yields:
\begin{equation}\label{I3}I_3=\frac{27}{2\sqrt{2}\pi}\int_{\RR}\int_{-\infty}^t\int_{-\infty}^te^{-3t^2-\frac{3}{2}\lambda_1^2-\frac{3}{2}\lambda_2^2}(t-\lambda_1)(t-\lambda_2)|\lambda_1-\lambda_2|d\lambda_1d\lambda_2dt=\frac{1}{\sqrt{6}}\end{equation}
\end{remark}


\section{Examples}\label{sec:Examples}

Some of these examples have already been studied in \cite{GaWe3, LerarioLundberg, NazarovSodin, Sarnak, Sodin}.

\begin{example}[Kostlan ensemble]\label{Kostlanex}
Let's start by considering the case $n=1$. In this case the zero locus of $f$ consists of points on $S^1$; the classical setting of univariate polynomials over the real line is obtained by defining $\hat{f}(t)=f(1,t)$ (the \emph{dehomogenization} of $f$): the average number of zeros of a random $\hat{f}$ on $\RR$ equals $\frac{1}{2}\EE b_0(f)$. If the covariance structure of $f$ is given by:
$$\EE_{\textrm{Kostlan}}\{f(x)f(y)\}=\langle x,y\rangle^d, \quad x,y\in \RR^2,$$
$f$ is said to be a \emph{Kostlan} random polynomial (notice that the covariance structure of the corresponding random field $\hat{f}:\RR\to \RR$ is given by $\EE\{\hat{f}(t)\hat{f}(s)\}=(1+ts)^d$).  Concerning $\EE b_0(f)$, when $n=1$, one has the \emph{exact} result \cite{EdelmanKostlan95}:
$$\EE_{\textrm{Kostlan}} b_0(f)=2\sqrt{d}=\Theta(d^{1/2})\quad (n=1).$$

For the general $n>1$ the covariance structure of $f\in W_{n,d}$ is given by:
$$\EE_{\textrm{Kostlan}}\{f(x)f(y)\}=\langle x, y \rangle^d,\quad x,y\in\RR^{n+1}.$$

This ensemble is coherent, and the rescaling exponent is $\lambda=1/2$.
In order to see this, 
first we use Equation (\ref{eq:ptor}) and \cite[Eq. (8)]{Kostlan}.

\begin{equation}
 {p_d(\ell)}^2 = \frac{|S^n|}{G_{n,d} d(n,\ell)} \frac{\Gamma(\frac{n+1}{2}) \left(\frac{n-1}{2} + \ell \right) \Gamma \left( n-1 + \ell \right) d!}{2^{d-1} (n-1)! \ell! \left(\frac{d - \ell}{2} \right)! \Gamma \left( \frac{n+1}{2} + \frac{d+\ell}{2} \right)},
\end{equation}
where $G_{n,d}$ denotes the normalization constant needed to satisfy (\ref{norm}).

Next using Equation (\ref{eq:dimharmonic}) for $d(n,\ell)$ and simplifying,
\begin{equation}
 {p_d(\ell)}^2 = \frac{|S^n|}{G_{n,d}} \Gamma \left( \frac{n+1}{2} \right) \frac{ d!}{2^{d} \left(\frac{d - \ell}{2} \right)! \Gamma \left( \frac{n+1}{2} + \frac{d+\ell}{2} \right)}.
\end{equation}

Multiplying and dividing by $\left( \frac{d+\ell}{2} \right)! = \Gamma \left( \frac{d+\ell}{2} + 1 \right)$, we have
\begin{equation}
 {p_d(\ell)}^2 = \frac{|S^n|}{G_{n,d}} \Gamma \left( \frac{n+1}{2} \right)  \frac{ \Gamma \left( \frac{d+\ell}{2} + 1 \right) }{ \Gamma \left( \frac{n+1}{2} + \frac{d+\ell}{2} \right)} \frac{ 1}{2^{d} } \binom{d}{\frac{d-\ell}{2}}.
\end{equation}
As $d \rightarrow \infty$ we have \cite{Olver}:
$$  \frac{ \Gamma \left( \frac{d+\ell}{2} + 1 \right) }{ \Gamma \left( \frac{n+1}{2} + \frac{d+\ell}{2} \right)} = \left( \frac{d+\ell}{2} \right)^{-(n-1)/2}(1+ O(1/d)). $$

\begin{equation}
p_d(\ell)^2 = \frac{|S^n| \Gamma \left( \frac{n+1}{2} \right)}{G_{n,d} \sqrt{2}(d/2)^{(n-2)/2} } (1+ O(1/d)) \left( 1 + \frac{\ell}{d} \right)^{-(n-1)/2}  \frac{ 1}{2^{d-1} \sqrt{d} } \binom{d}{\frac{d-\ell}{2}}.
\end{equation}
As in the classical central limit of the binomial distribution, by Stirling's formula,
under the rescaling $\ell = x d^{1/2}$, we have
$$ \frac{ 1 }{2^{d-1} \sqrt{d}} \binom{d}{\frac{d-x \sqrt{d}}{2}} \sim \frac{e^{-x^2/2}}{d\sqrt{2 \pi}}.$$
Since $\left( 1 + \frac{x}{\sqrt{d}} \right)^{-(n-1)/2} \sim 1,$ we conclude that
$\frac{|S^n| \Gamma \left( \frac{n+1}{2} \right)}{G_{n,d} \sqrt{2}(d/2)^{(n-2)/2}}$ converges to a constant $c$ and that
$$ \sqrt{d} p_d(x\sqrt{d}) \sim c \frac{e^{-x^2/4}}{(2 \pi)^{1/4} } .$$

In general, for the Kostlan distribution Gayet and Welschinger \cite{GaWe2, GaWe3} have recently proved that (see also \cite[Section 2.5.4]{Sodin}):
$$\EE_{\textrm{Kostlan}} b_0(f)=\Theta (d^{n/2}).$$
This result can be immediately recovered by applying Corollary \ref{coro:sampling}, 
since the Kostlan ensemble is coherent (with rescaling exponent $\lambda=1/2$).

We proceed now with the estimation of the leading coefficient of $d^{n/2}$ in $\EE b_0(X).$ First notice that:
\begin{equation}\label{ko}\mu_{k}^{\textrm{Kostlan}}=\int_{0}^{\infty}c^2\frac{e^{-x^2/2}}{\sqrt{2\pi}}x^kdx=\frac{c^2 2^{k/2}}{2 \sqrt{\pi}}\Gamma\left(\frac{k+1}{2}\right)\end{equation}
which provides for this case:
$$\frac{1}{n+2}\frac{\mu_{n+3}}{\mu_{n+1}}=\frac{2\Gamma\left(\frac{n}{2}+2\right)}{(n+2)\Gamma\left(\frac{n}{2}+1\right)}=\frac{\Gamma\left(\frac{n}{2}+2\right)}{\left(\frac{n}{2}+1\right)\Gamma\left(\frac{n}{2}+1\right)}=1.$$
Plugging \eqref{ko} into \eqref{eqfinal} we obtain exponential decay of the leading coefficient\footnote{In fact for the Kostlan case a stronger decay was shown in \cite{GaWe2} involving a factor $e^{-cn^2}$.}:
\begin{equation}\label{kofinal}\lim_{d\to \infty}\frac{\EE b_0(X)}{d^{n/2}}\leq 2^{\frac{3}{2}} I_{n+1}\leq c_1 n^{-c_2}e^{-n+c_3\sqrt{n}}.\end{equation}
For the case of curves $(n=2)$, we can use \eqref{I3} and obtain:
$$\lim_{d\to \infty}\frac{\EE b_0(X)}{d}\leq \frac{2}{\sqrt{3}}\approx 1.1547$$

\end{example}

\begin{example}[real Fubini-Study (RFS) ensemble]\label{RFSex}
In terms of the weights ${p_d(\ell)}$, the real Fubini-Study ensemble is simpler: the ${p_d(\ell)}$ are constant.
Normalizing so that the sum is unity, we have ${p_d(\ell)} \sim 2/d $.
The rescaling exponent is $\lambda=1$, and $d p(x d)$ rescales to the characteristic function of $[0,1]$.

The real Fubini-Study ensemble (the name is due to P. Sarnak, see \cite{Sarnak}) is obtained 
 by sampling at random a polynomial uniformly from the unit sphere in the $L^2_{S^n}$-norm in $W_{n,d}$.
In the case $n=1$ (using techniques from \cite{EdelmanKostlan95}) one has again the exact result \cite{LerarioLundberg}:
$$\EE_{\textrm{RFS}} b_0(f)=2\sqrt{\frac{d(d+1)}{3}} \quad (n=1).$$
For the number of extrema, using $I_2=\sqrt{3}/(2\sqrt{2})$, one obtains (see \cite{Nicolaescu2} for more details):
 $$\EE\#\{\textrm{extrema of $f|_{S^1}$}\}\sim2d\sqrt{\frac{3}{5}}$$

In the general case, it is proved in \cite{LerarioLundberg} that:
$$\EE_{\textrm{RFS}}b_0(f)=\Theta(d^n).$$
In view of Milnor's (deterministic) bound $b_0(f)= O(d^n)$, the interesting part of this statement is the lower bound, which is obtained using a technique similar to the one introduced in \cite{NazarovSodin} for the study of nodal domains of random spherical harmonics on $S^2.$

Again this result is recovered using Corollary \ref{coro:sampling} from the knowledge of the case $n=1$.
As above we write:
$$\EE b_0(X)=a_nd^n+o(d^{n})$$

Since in this case $\psi=\chi_{[0,1]}$ all its moments are easily computable, and since $\mu_k((\chi_{[0,1]})^2)=(k+1)^{-1}$ we have:
$$\frac{\mu_{n+3}}{(n+2)\mu_{n+1}}=\frac{1}{n+4}.$$
Plugging this into Theorem \ref{estimates}, one obtains exponential decaying of the coefficients $a_n$:
$$a_n \leq C_1n^{-\frac{n}{2}-c_2}e^{-n+c_3 \sqrt{n}}\quad \textrm{for some constants $C_1, c_2, c_3>0$}.$$
In particular, the average number of minima of a RFS polynomial on $S^2$ of degree $d$ is:
$$\EE \#\{\textrm{minima of $f|_{S^2}$}\}\sim \frac{2^{3/2}}{6}I_3d^2=\frac{d^2}{3\sqrt{3}}$$
and consequently the Nazarov-Sodin constant for $b_0(X)$ (in the projective plane) satisfies:
$$a_2=\lim_{d\to\infty}\frac{\EE b_0(X)}{d^2}\leq  \frac{1}{3\sqrt{3}}\approx 0.1924$$
In fact M. Nastasescu \cite{Nastasescu} has studied this constant numerically and obtained the approximation $a_2\approx 0.0195$.
\end{example}

\begin{example}[Ensembles with a prescribed order of growth]Let $\psi:\RR_+\to \RR$ an integrable function with a subgaussian tail (for example take $\psi=\chi_{[0,1]}$) and $0<\lambda\leq1$. Define:
$$p_d(\ell)=\frac{1}{d^{\lambda}}\psi\left(\frac{\ell}{ d^{\lambda}}\right).$$
Then by construction we have $p_d(xd^{\lambda})d^{\lambda}=\psi(x)$, and the above theorem applies yielding for this model:
$$\EE b_0(X)=\Theta(d^{\lambda n}).$$
More generally, one can consider a sequence $\{g(d)\}_{d\in \mathbb{N}}$ with $0<g(d)\leq d$ and set $p_d(\ell)=g(d)^{-1/n}\psi(g(d)^{-1/n}\ell)$. The proofs of the above theorems still work in this case, providing $\EE b_0(X)=\Theta(g(d))$.
\end{example}

\begin{example}[Random spherical harmonics]We discuss in this example the special case when we give all the ``power'' to the top coefficient, i.e.
$$p_d(\ell)\equiv 0\quad\textrm{except for}\quad p_{d}(d)=1.$$ 
As a model for random polynomials, 
this case is degenerate; it produces random spherical harmonics of degree $d$ (recall that the representation of $O(n+1)$ in the space of spherical harmonics is irreducible).
Interest in this model comes from studies related to "quantum chaos" \cite{NazarovSodin}.

Notice that our rescaling assumptions are not satisfied in this case, but we can still use Theorem \ref{thm:fyodorov} to estimate the number of extrema of a random spherical harmonic $f$ of degree $d$  on the sphere $S^n$

In this case we can follow the line of the proof of Theorem \ref{estimates} (the first line of \eqref{extrema} does not require the rescaling assumption and holds in general for random fields with orthogonal invariance). 

We start by writing equations \eqref{f1} and \eqref{f2} for the covariance structure of random spherical harmonics; using Lemma \ref{lemmaC} we obtain:
$$F'(1)=|S^n|\frac{(n+d-1)!(n+2d-1)}{(d-1)!n!}\quad\textrm{and}\quad F''(1)=|S^n|\frac{(n+d)!(n+2d-1)}{(d-2)!n!(n+2)}.$$
Plugging these formulas into \eqref{eq:B} and \eqref{eq:1-B} we obtain:
$$ \lim_{d\to \infty}B=1\quad\textrm{and}\quad (1-B)^{-1}\sim \frac{d^2}{2(n+2)}.$$
Thus, using equation \eqref{extrema} we have:
\begin{align}\lim_{d\to\infty}\frac{\EE\#\{\textrm{extrema of $f|_{S^n}$}\}}{d^n}&= \lim_{d\to \infty}\frac{4}{d^{\lambda n}}(1+B)^{\frac{n+1}{2}}(1-B)^{-\frac{n}{2}}\int_{-\infty}^{+\infty}e^{-\frac{(n+1)Bt^2}{2}}\frac{d}{dt}\mathcal{F}_{n+1}(t)dt\\
&=2^{\frac{5}{2}}(n+2)^{-\frac{n}{2}}I_{n+1}\\
&\sim C_1 n^{-c_2 -\frac{n}{2}}e^{-n-c_3\sqrt{n}}.\end{align}
For example, using the line before the last one and $I_2=\sqrt{3}/(2\sqrt{2})$ and $I_3=1/\sqrt{6}$, we obtain:
$$\EE\#\{\textrm{extrema of $f|_{S^1}$}\}\sim2d\quad \textrm{and} \quad \EE\#\{\textrm{extrema of $f|_{S^2}$}\}\sim\frac{d^2}{\sqrt{3}}$$

Again, by Proposition \ref{prop:criticalbound}, we can estimate $\EE b_0(X)$ using (one-half) this bound; M. Nastasescu studied the leading coefficient for the case $n=2$ numerically in this case as well.
Our estimate exceeds the value she found, $ 0.0598 $, by an approximate factor of $4.8$.
For the related setting of random plane waves, M. Krishnapur has obtained an upper bound (by counting the number of vertical tangents of the nodal set)
that exceeds the experimental prediction by a factor of approximately $3.6$ \cite{Krishnapur}.

\end{example}



\end{document}